\newtheorem{theorem}{Theorem}
\newtheorem{lemma}{Lemma}
\newcommand{\B}{\mathrm{B}}
\newcommand{\dRec}{\mathrm{dRec}}
\author{Jean Cardinal\affiliationmark{1}
  \and Vera Sacrist\'an\affiliationmark{2}
  \and Rodrigo I. Silveira\affiliationmark{2}}
\title[A note on flips in diagonal rectangulations]{A Note on Flips in Diagonal Rectangulations}
\affiliation{
  Universit\'e libre de Bruxelles (ULB), Belgium\\
  Universitat Polit\`ecnica de Catalunya (UPC), Spain}
\keywords{rectangulations, flip graphs, pattern-avoiding permutations}
\begin{document}

\maketitle

\begin{abstract}
Rectangulations are partitions of a square into axis-aligned rectangles. 
A number of results provide bijections between combinatorial equivalence classes of rectangulations and families of pattern-avoiding permutations.
Other results deal with local changes involving a single edge of a rectangulation, referred to as flips, edge rotations, or edge pivoting.
Such operations induce a graph on equivalence classes of rectangulations, related to so-called {\em flip graphs} on triangulations and other
families of geometric partitions.
In this note, we consider a family of flip operations on the equivalence classes of {\em diagonal rectangulations}, and their interpretation as transpositions in the associated {\em Baxter} permutations, avoiding the vincular patterns $\{ 3\underline{14}2, 2\underline{41}3\}$. This complements results from Law and Reading (JCTA, 2012) and provides a complete characterization of flip operations on diagonal rectangulations, in both geometric and combinatorial terms.
\end{abstract}
\maketitle

\section{Introduction}

\subsection{Flip graphs}

The analysis of geometric partitions of space, such as triangle meshes, binary space partitions, and floorplans for integrated circuits
plays a major role in discrete and computational geometry and its applications.
In order to understand the underlying combinatorial structure of these partitions, it is often useful to define elementary operations that modify this structure locally.
We can then connect distinct partitions using sequences of such operations.

In triangulations, such a notion is known under the term of \emph{flip}.
A flip in a triangulation is typically defined as the replacement of an edge shared by two triangles forming a convex quadrilateral by the other diagonal of the quadrilateral. This allows the definition of a \emph{flip graph}, the vertices of which are triangulations, and in which two triangulations are adjacent whenever one can be obtained from the other by a single flip. This is a special case of the more general notion of \emph{reconfiguration graph}.

Flip graphs have applications in enumeration and random generation of geometric partitions, as well as optimization. 
The notion of flip graph has been studied for many distinct families of triangulations (maximal planar graphs and triangulations of a point set~\cite{BH09,BV11}, triangulations of a topological surface~\cite{N94}, see also \cite{DRS10} and references therein), and generalized to other families of geometric partitions, such as domino tilings~\cite{R04}, quadrangulations~\cite{N96a}, and rectangulations, the topic of the present contribution.
Flip graphs have been shown to have intimate links with many important structures in combinatorics, such as the Catalan objects~\cite{Catalan}, the Tamari lattice and the associahedra~\cite{Tamari}, cyclohedra~\cite{S03}, and partial cubes~\cite{E10}.

\subsection{Geometric partitions and pattern-avoiding permutations}

There exists a collection of results establishing bijections between families of geometric space partitions and pattern-avoiding permutations. We will use the \emph{word notation} for permutations, in which a permutation $\sigma$ is denoted by the word $\sigma (1) \sigma (2)\ldots \sigma (n)$. A permutation $\sigma$ is said to contain the \emph{pattern} $\pi$, where $\pi$ is another permutation, whenever there exists a subsequence of $\sigma$ whose elements are in the same relative order as the elements of $\pi$. Hence for instance the permutation $45213$ contains the pattern $213$, and also the pattern $3412$ in the form of the subsequence $4513$. Pattern-avoiding permutation classes are families of permutations that do not contain any occurence of one or more given patterns.

It is well known, for instance, that triangulations of a convex $(n+2)$-gon are in one-to-one correspondence with $312$-avoiding permutations on $n$ elements, and those are counted by the Catalan numbers (OEIS\footnote{Online Encyclopedia of Integer Sequences: \url{https://oeis.org/}} A000108). Similarly, guillotine partitions of a rectangle into $n$ rectangles, obtained by recursive splitting with a horizontal or vertical cut, can easily be seen to be in one-to-one correspondence with $\{3142, 2413\}$-avoiding permutations, called \emph{separable} permutations~\cite{BBL98}, which are counted by the Schr\"oder numbers (OEIS A006318). This has recently been generalized to separable \emph{$d$-permutations} and higher-dimensional guillotine partitions~\cite{AM10}.

We will use the \emph{underline} notation for more complex forbidden patterns in permutations, known as {\em vincular patterns}.
In this notation, an underlined block of elements indicates that they need to occur consecutively in the permutation.
For instance, forbidding the pattern $3\underline{14}2$ amounts to forbidding all occurrences of the pattern $3142$ with the added condition that $1$ and $4$ must occur consecutively. See Kitaev~\cite{K11} for precise definitions and further terminology. 

The objects of interest in this paper are \emph{rectangulations}, defined as partitions of a square into axis-aligned rectangles.
Several combinatorial equivalence classes of such rectangulations have been defined, which are known to be in one-to-one correspondence with families of permutations avoiding certain vincular patterns.
\emph{Mosaic floorplans}, for instance, have been shown to be in correspondence with \emph{Baxter permutations}, avoiding the patterns $3\underline{14}2$ and $2\underline{41}3$. This bijection seems to go back to the work of Dulucq and Guibert on Baxter permutations involving {\em twin binary trees}~\cite{DG96}. A complete description in terms of twin binary trees is given in Section 6 of Felsner et al.~\cite{FFNO11}. A simple description of a bijection for mosaic floorplans is given by Ackerman et al.~\cite{ABP06}.
Mosaic floorplans are also in bijection with \emph{twisted Baxter permutations} avoiding the patterns $3\underline{41}2$ and $2\underline{41}3$~\cite{LR12}. These two families are therefore in one-to-one correspondence, together with the family of $\{3\underline{14}2, 2\underline{14}3\}$-avoiding permutations. They will be instrumental in what follows.
Interestingly, the $\{ 3\underline{41}2, 2\underline{14}3 \}$-avoiding permutations are {\em not} in one-to-one correspondence with (twisted) Baxter permutations. These were actually shown by Asinowski et al. to count other equivalence classes of rectangulations~\cite{ABBMP13}, namely those preserving the neighborhood relation between the {\em segments} of the rectangulation.
{\em Generic} rectangulations, also known as {\em rectangular drawings}, are combinatorial rectangulations in which we take into account the adjacency relation between the rectangles.
Generic rectangulations have been shown to be in one-to-one correspondence with \emph{2-clumped} permutations, avoiding the vincular patterns $\{3\underline{51}24, 3\underline{51}42, 24\underline{51}3, 42\underline{51}3\}$~\cite{R12}. Table~\ref{tab:bijections} lists the known bijections between families of pattern-avoiding permutations and rectangulations.

\begin{table}
\begin{center}
  \begin{tabular}{|c|c|}
\hline 
Permutations & Rectangulations \\
\hline
\hline
Separable: $\{3142, 2413\}$-avoiding  & Slicing floorplans, \\
& or guillotine partitions \\
\hline
Baxter:
$\{ 3\underline{14}2, 2\underline{41}3\}$-avoiding & Mosaic floorplans, \\
Twisted Baxter:
$\{3\underline{41}2, 2\underline{41}3\}$-avoiding & or diagonal rectangulations, \\
$\{3\underline{14}2, 2\underline{14}3\}$-avoiding & or R-equivalent rectangulations \cite{Y03,ABP06,LR12} \\
\hline
$\{3\underline{41}2, 2\underline{14}3\}$-avoiding & S-equivalent rectangulations \cite{ABBMP13} \\ 
\hline
2-clumped: & generic rectangulations, \\ 
$\{3\underline{51}24, 3\underline{51}42,
24\underline{51}3, 42\underline{51}3\}$-avoiding & or rectangular drawings \cite{R12} \\
\hline
Separable $d$-permutations & Guillotine partitions \\
 & of $2^{d-1}$-dimensional boxes \cite{AM10}\\
\hline 
  \end{tabular}
  \end{center}
\caption{\label{tab:bijections}Known bijections between families of pattern-avoiding permutations and rectangulations.}
\end{table}

\subsection{Flips in rectangulations}

Different types of local operations can be defined on rectangulations, which have been given different names, such as flips, local move, edge rotations, or edge pivoting. In general, they all consist of replacing a horizontal edge of the rectangulation by a vertical one, or vice versa.
In what follows, and with a slight abuse of terminology, we will refer to all those under the common name of {\em flip}.

Law and Reading~\cite{LR12} described a family of flips on rectangulations and provided an elegant combinatorial characterization. They showed that two rectangulations were connected by such a flip if and only if they were in the cover relation of a certain natural lattice structure, analogous to the Tamari lattice on triangulations (hence on $312$-avoiding permutations), and part of the family of {\em Cambrian} lattices~\cite{Reading2012}. This lattice was also studied by Giraudo~\cite{G12} under the name of {\em Baxter lattice}.
Wide-reaching generalizations of these structures have been studied from the order-theoretic, algebraic, and polyhedral points of views by Reading~\cite{Reading2016}, Chatel and Pilaud~\cite{CP15}, and Pilaud and Santos~\cite{PS17}, among others.

Ackerman, Barequet and Pinter~\cite{ABP06b} defined related flip operations on rectangulations of a point set. 
These rectangulations are defined on a given point set so that every point lies on a segment of the rectangulation, and vice versa.
Ackerman et al. studied the flip graph induced by these operations~\cite{AABLMST16}.
The flips considered by Ackerman et al. are the same as the ones in Law and Reading whenever the point set lies on the diagonal.
Their results include a linear upper bound on the diameter of this flip graph (see \cite{AABLMST16}, Section 4).

An interesting application of flips in rectangulation to visualization of hierarchical data has been recently proposed by Sondag, Speckmann, and Verbeek~\cite{SSV18}. They use flips to maintain {\em treemap} layouts, representing information items by nested rectangles, under changes in the data. This improves the stability of the representation over time, and provably allows for all possible treemap layouts.

\subsection{Contribution}

We first describe a known bijection from diagonal rectangulations to Baxter permutations.
Then we consider flip operations on diagonal rectangulations, classify the different kinds of flips and give a combinatorial interpretation for each. Some of them, namely those involving edges that do not intersect the diagonal of the square, have already been characterized by Law and Reading~\cite{LR12}. We recall this characterization. For the others, we prove that the obtained flip graph is isomorphic to the graph on the corresponding Baxter permutations in which two Baxter permutations are adjacent whenever they differ by a single transposition of consecutive elements. 
We comment on the symmetry of the two interpretations. This provides a complete one-to-one correspondence not only between rectangulations and Baxter permutations, but also between these sets of natural operations on the geometric and combinatorial structures. Overall, this yields a complete characterization of flip operations in diagonal rectangulations. Illustrations of flip operations on rectangulations with three rectangles is given in Figure~\ref{fig:smallflipgraph}. 

\begin{figure}
\begin{center}
\includegraphics[width=.2\textwidth]{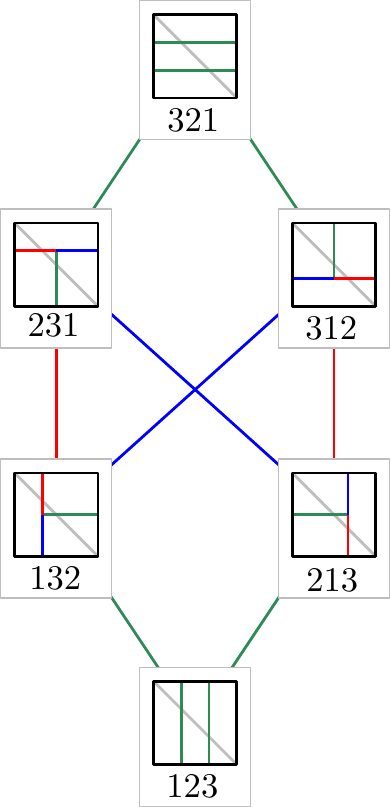}
\end{center}
\caption{\label{fig:smallflipgraph}Flips in diagonal rectangulations with three rectangles, together with their associated Baxter permutation.}
\end{figure}

\subsection{Plan}
In Section~\ref{sec:baxter} we provide some basic definitions and give a simple known bijection between diagonal rectangulations and Baxter permutations. 
In Section~\ref{sec:flips} we define and categorize a number of flip operations on diagonal rectangulations. 
Finally, in Section~\ref{sec:combinatorial} we give combinatorial characterizations for all the described flip operations. We first summarize the Law-Reading characterization in terms of the lattice structure (\ref{subsec:LR}), then proceed with the characterization of other flips (\ref{subsec:BCN}), which is our main new result.

\section{Diagonal rectangulations and Baxter permutations}
\label{sec:baxter}

In this section, we first define the combinatorial notion of diagonal rectangulation. 
Then we present maps from the set of diagonal rectangulations with $n$ rectangles to the set of permutations on $n$ elements. 
Those maps were described previously, and are known to be bijections between diagonal rectangulations and permutations avoiding some vincular patterns on four elements. 
They will be instrumental in the combinatorial interpretation of the flip graph on diagonal rectangulations.
The material of this section is adapted from Ackerman et al.~\cite{ABP06}, and Law and Reading~\cite{LR12}.
A description of an essentially equivalent map in terms of pairs of twin binary trees was given by Felsner et al.~\cite{FFNO11}.

\subsection{Diagonal rectangulations}

A rectangulation is a partition of the unit square into axis-aligned rectangles. 
We define {\em vertices} as corners of the rectangles, and {\em edges} as line segment connecting two vertices, with no other vertex in between.
The term {\em segment} is used to refer to inclusion-wise maximal line segments of the rectangulation, possibly composed of several edges. 
We consider only rectangulations in which every vertex has exactly three incident edges, except the four vertices of the square, which have exactly two incident edges.
We refer to the number of incident edges as the {\em degree} of the vertex, and classify the vertices into four self-explanatory classes depending on the orientation of their three incident edges and denoted by $\vdash$, $\dashv$, $\top$, and $\bot$.

We refer to the top-left to bottom-right diagonal of the square as the {\em main} diagonal, or simply the diagonal, when there is no ambiguity.
A {\em diagonal} rectangulation is a rectangulation in which every rectangle intersects the main diagonal.
However, since we deal with combinatorial structures, we actually define diagonal rectangulations as equivalence classes of such partitions of the square, with respect to moves that do not change the adjacency relation between the rectangles. 
Hence we allow changes in the positions of the vertices and edges, but we forbid moves that change the order of the vertices along a segment.
This allows the representation of any diagonal rectangulation in a unique way such that the intersections of the main diagonal with the segments are equidistributed. 
An example of diagonal rectangulation is given in Figure~\ref{fig:diag}.
The representation as a pair of {\em twin binary trees} is shown on Figure~\ref{fig:twin}.
We have the following characterization of (the equivalence classes of) diagonal rectangulations.

\begin{lemma}
\label{lem:diagchar}
A rectangulation is diagonal if and only if it does not contain one of the two forbidden configurations of Figure~\ref{fig:forbdiag}.
\end{lemma}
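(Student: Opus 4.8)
The plan is to prove the two implications separately, reasoning directly about how the main diagonal $\delta$ can meet the rectangles.

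For necessity, suppose the rectangulation is diagonal, and fix a representative in which every rectangle meets $\delta$. List the rectangles $R_1,\dots,R_n$ in the order in which $\delta$ enters them as it runs from the top-left to the bottom-right corner of the square; this is well defined because the rectangles have pairwise disjoint interiors, so the arcs $\delta\cap R_i$ are pairwise internally disjoint. The structural fact to record is that this order is a common linear extension of the two adjacency relations: if $R_i$ lies directly to the left of, or directly above, $R_j$, then $i<j$. Indeed, if $R_i$ is directly left of $R_j$ across a vertical wall $x=c$, then $R_i\subseteq\{x\le c\}$ and $R_j\subseteq\{x\ge c\}$, so the piece of $\delta$ lying in $R_i$ has strictly smaller $x$-coordinate than the piece lying in $R_j$, and $x$ increases monotonically along $\delta$; the ``directly above'' case is symmetric, using that $y$ decreases along $\delta$. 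Granting this, I would inspect the two configurations of Figure~\ref{fig:forbdiag} one at a time --- they are related by a symmetry of the square that preserves the class of diagonal rectangulations, so only one case is needed --- and show that the adjacencies a configuration prescribes, together with the requirement that all of its rectangles still meet $\delta$, are contradictory: one of its rectangles is forced to lie entirely to one side of $\delta$. Hence a diagonal rectangulation contains neither configuration.

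For sufficiency I would argue by induction on the number $n$ of rectangles, the case $n=1$ being immediate. Assume $n\ge 2$ and that the rectangulation avoids both configurations. I would single out a rectangle $R$ incident to the top-left or bottom-right corner of the square that can be peeled: deleting $R$ and enlarging one of its former neighbours into the vacated corner produces a rectangulation $R'$ on $n-1$ rectangles. The hypothesis enters precisely here: one shows that avoiding the two configurations of Figure~\ref{fig:forbdiag} guarantees both that such an $R$ exists whose removal is clean, so that $R'$ is again a generic rectangulation, and that $R'$ again avoids the two configurations. By induction $R'$ is diagonal; taking a representative of $R'$ in which all $n-1$ rectangles cross $\delta$ and re-inserting a suitably small copy of $R$ at that corner (which $\delta$ passes through) yields a representative of the original rectangulation in which every rectangle, $R$ included, crosses $\delta$. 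Thus it is diagonal.

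The step I expect to be the main obstacle is the peeling claim in the sufficiency direction: one must identify exactly which rectangle to remove and by which local operation, and then establish that avoiding the two configurations is precisely what makes that operation well-defined and keeps the smaller rectangulation within the same class. An alternative that avoids the explicit induction is to split on whether the rectangulation possesses a segment running from one side of the square to the opposite side: if it does, cut along that segment and recurse on the two smaller, still config-avoiding pieces, then glue their diagonal representatives; if it does not, one argues that a rectangulation with no such segment that avoids the two configurations still has enough structure to be laid out on $\delta$ directly, in the spirit of the constructions of Ackerman et al. and Law and Reading. In either route, the substance of the proof is in showing that config-avoidance is exactly the obstruction that disappears in the smaller instances.
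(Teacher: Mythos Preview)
Your argument for necessity is sound and in fact more explicit than the paper's, which simply declares that direction ``clear''. The observation that the order in which the diagonal meets the rectangles is a common linear extension of the directly-left-of and directly-above relations is exactly the right tool, and reducing to a single configuration by reflection across the main diagonal is legitimate.

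For sufficiency the paper takes a different and shorter route. Rather than inducting on the number of rectangles, it sketches a direct construction: assuming the two configurations are absent, one greedily threads a monotone staircase curve from the top-left to the bottom-right corner meeting every rectangle; this curve splits the drawing into a pair of twin binary trees, which are then redrawn with their leaves equidistributed on the main diagonal. This sidesteps the inductive bookkeeping entirely and is what the paper means by ``can be deduced from the techniques in~\cite{ABP06}''.

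Your peeling strategy can also be made to work --- it is essentially the block-deletion approach of Ackerman, Barequet and Pinter that the paper cites --- but as stated it has a gap. Removing a corner rectangle $R$ and ``enlarging one of its former neighbours into the vacated corner'' does not in general yield a rectangulation: the top-left rectangle may have several neighbours along its right side and several along its bottom side simultaneously, and avoidance of the two configurations of Figure~\ref{fig:forbdiag} does not force a single neighbour on either side. The operation you actually need is block deletion: if the bottom-right corner of $R$ is of type $\dashv$ you shift \emph{all} of its right-neighbours to the left boundary, and if it is of type $\bot$ you shift all of its bottom-neighbours to the top boundary. With the correct operation in hand, the two remaining checks --- that block deletion cannot create either forbidden configuration, and that a thin copy of $R$ can be reinserted into a diagonal drawing of the reduced rectangulation without pushing any other rectangle off the diagonal --- become routine. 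Your alternative via a through-segment runs into the same difficulty when no such segment exists, since you then defer to an unspecified direct construction.
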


\begin{figure}\begin{center}
\includegraphics[page=1,scale=.6]{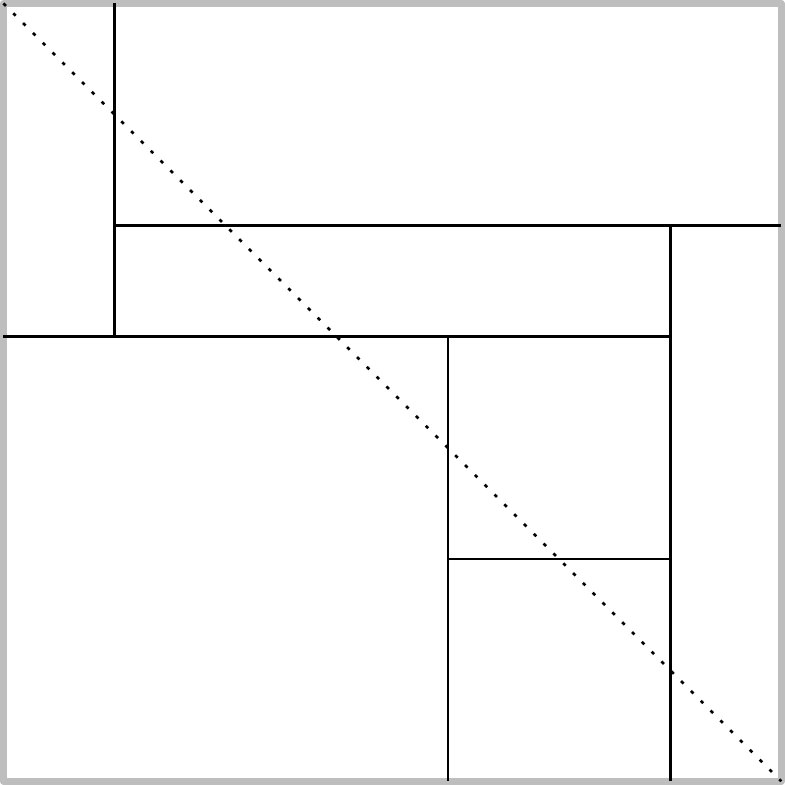}
\caption{\label{fig:diag}An example of diagonal rectangulation.}
\end{center}\end{figure}

\begin{figure}\begin{center}
\includegraphics[page=20,scale=.6]{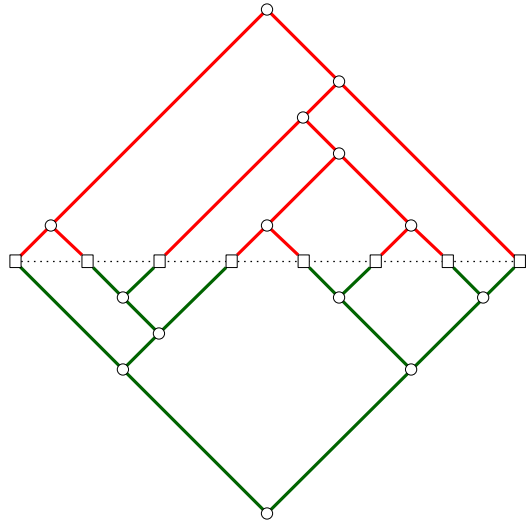}
\caption{\label{fig:twin}Twin binary trees associated with a diagonal rectangulation.}
\end{center}\end{figure}

\begin{figure}\begin{center}
\includegraphics[page=11,scale=.45]{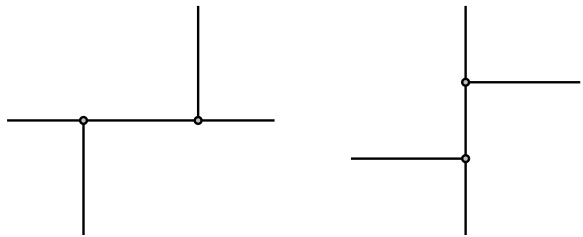}
\caption{\label{fig:forbdiag}Forbidden configurations in a diagonal rectangulation.}
\end{center}\end{figure}

The lemma can be deduced from the techniques in~\cite{ABP06}. We provide a brief sketch of an alternative proof. The necessity of the condition is clear. To prove sufficiency, one can greedily find a drawing in which all rectangles intersect a monotone curve in the square,
from the top left to the bottom right corner. This splits the drawing into two binary trees, which can be redrawn so that the leaves are equidistributed on the main diagonal.

We can also consider the equivalence classes of rectangulations for which we can change the relative position of vertices along a segment.
Two rectangulations are said to be equivalent when one can be obtained from the other by performing so-called {\em wall slides}, as shown on Figure~\ref{fig:wallslides}. Wall slides modify the adjacency relation among the rectangles, by exchanging the order of two vertices along a segment.
The equivalence relation is sometimes referred to as {\em $R$-equivalence}~\cite{ABBMP13}.
The $R$-equivalence classes are called {\em mosaic floorplans}.

\begin{lemma}
\label{lem:diagrep}
Every mosaic floorplan, or $R$-equivalence class, has a unique representative as a diagonal rectangulation. 
\end{lemma}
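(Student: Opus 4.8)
The statement combines an \emph{existence} claim (every $R$-equivalence class contains some diagonal rectangulation) and a \emph{uniqueness} claim (it contains only one). The plan is to treat the two separately.

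For existence, I would start from an arbitrary rectangulation $R$ in the given class and repeatedly destroy forbidden configurations by wall slides. If $R$ is not diagonal, Lemma~\ref{lem:diagchar} produces an occurrence of one of the two configurations of Figure~\ref{fig:forbdiag}, and one checks that there is a wall slide, local to that occurrence, that removes it while staying inside the same $R$-equivalence class. To see that this terminates, I would realize each rectangulation geometrically with the main diagonal $D$ drawn in, and track a nonnegative integer potential measuring how far $R$ is from being diagonal --- for instance the number of rectangles not met by $D$, refined by the total combinatorial distance (in the rectangle-adjacency graph) from each such rectangle to the ``staircase'' of rectangles crossing $D$. The wall slides used to eliminate forbidden configurations can always be oriented so as to push an offending rectangle toward $D$, hence strictly decrease this potential; as it cannot decrease forever, the process ends at a diagonal representative.

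For uniqueness, the plan is to show that two distinct diagonal rectangulations are never $R$-equivalent, by producing a complete invariant of $R$-equivalence that is simultaneously a complete invariant of diagonal rectangulations. The pair of twin binary trees of Figure~\ref{fig:twin} is the natural candidate. On the one hand, reading off the segment structure lying above and below $D$ assigns to every diagonal rectangulation such a pair, and the equidistributed-diagonal normal form recalled above shows that this pair determines the diagonal rectangulation uniquely; moreover every pair of twin binary trees arises this way. On the other hand, following Felsner et al.~\cite{FFNO11} (see also Ackerman et al.~\cite{ABP06}), the same pair of trees can be extracted from any rectangulation representing a given mosaic floorplan, and it is unchanged by a wall slide, since a wall slide merely swaps two vertices along a single segment and does not alter the ancestor--descendant relations the trees encode. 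Hence the map sending a diagonal rectangulation to its $R$-equivalence class is injective, and together with existence this makes it a bijection. Alternatively, uniqueness can be obtained directly by showing that every wall slide applied to a diagonal rectangulation creates a forbidden configuration and that, orienting wall slides toward the diagonal, the diagonal representative is the unique sink; but this requires a confluence argument that the twin-tree route bypasses.

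The hard part is the bookkeeping in the existence step: one must verify that the local wall slide eliminating an occurrence of a forbidden configuration does not introduce new occurrences that would offset the gain in the potential, which calls for a short case analysis over the two chiralities in Figure~\ref{fig:forbdiag} and over how the configuration can attach to neighboring segments. On the uniqueness side, the analogous technical point is to check carefully that the twin-binary-tree data is preserved by every wall slide --- equivalently, that wall slides are precisely the local moves fixing this data; granting that, both halves of the lemma follow formally from the cited correspondence.
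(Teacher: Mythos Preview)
The paper does not actually prove Lemma~\ref{lem:diagrep}: it is stated without proof, as a known fact inherited from the cited literature (Ackerman et al.~\cite{ABP06}, Law and Reading~\cite{LR12}, Felsner et al.~\cite{FFNO11}). There is therefore no argument in the paper to compare your proposal against.

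That said, your plan is along the standard lines and is essentially correct. The uniqueness half, via the pair of twin binary trees as a complete $R$-equivalence invariant that also determines the diagonal representative, is precisely the mechanism in the cited references. For existence, your wall-slide-plus-potential sketch is plausible, but you can sidestep the termination bookkeeping you flag as ``the hard part'': once the twin binary trees are known to be an invariant of the $R$-class, one can \emph{build} the diagonal representative directly from them (glue the two trees along their leaves on the main diagonal, as in Figure~\ref{fig:twin}), which gives existence without any iterative argument. This is the route implicit in~\cite{FFNO11} and in the sketch the paper gives for Lemma~\ref{lem:diagchar}, and it avoids having to verify that eliminating one forbidden configuration does not create others.
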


\begin{figure}\begin{center}
\includegraphics[page=10,scale=.45]{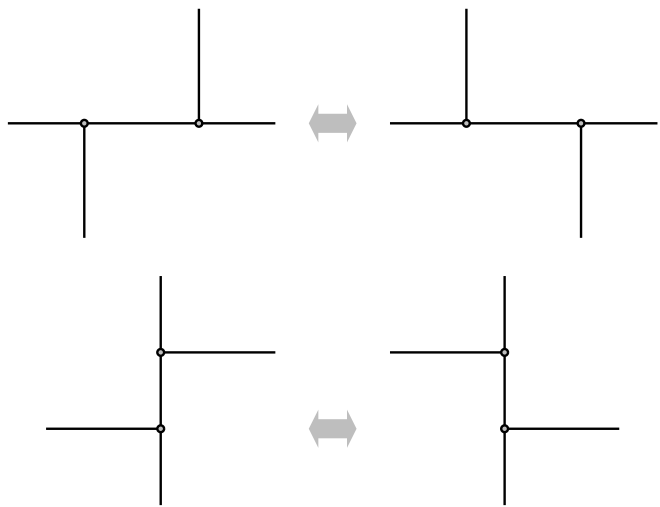}
\caption{\label{fig:wallslides}Wall slides.}
\end{center}\end{figure}

\subsection{A map from permutations to diagonal rectangulations}

Before delving into the details of the bijections, we first describe a map $\rho$ from any permutation to a diagonal rectangulation.

Given a permutation $\pi$ on $n$ elements, we consider the square to be dissected and divide its main diagonal into $n$ intervals, that we label
$1,2,\ldots ,n$, from left to right.
We then proceed iteratively by adding rectangles intersecting the intervals $\pi_1, \pi_2, \ldots ,\pi_n$. 
Before the $i$th step, we denote by $T_{i-1}$ the union of the rectangles that have already been drawn, together with the left and bottom edges of
the square. 
If $\pi_i = j$, we draw a rectangle intersecting the $j$th interval of the diagonal. We consider the left endpoint $\ell$ of this interval.
If $\ell$ is on the boundary of $T_{i-1}$, then the upper left corner of the $i$th rectangle is the highest point above $\ell$ that belongs to $T_{i-1}$. Otherwise, the upper left corner is the rightmost point on the boundary of $T_{i-1}$ that is directly left of $\ell$.
Similarly, consider the right endpoint $r$ of the $j$th interval. If $r$ is on the boundary of $T_{i-1}$, then the lower right corner of the new rectangle is the rightmost point of $T_{i-1}$ that is directly right of $r$. Otherwise, it is the highest point on the boundary of $T_{i-1}$ lying directly below $r$. The map is illustrated on Figure~\ref{fig:map}.

\begin{figure}\begin{center}
\includegraphics[page=14,scale=1]{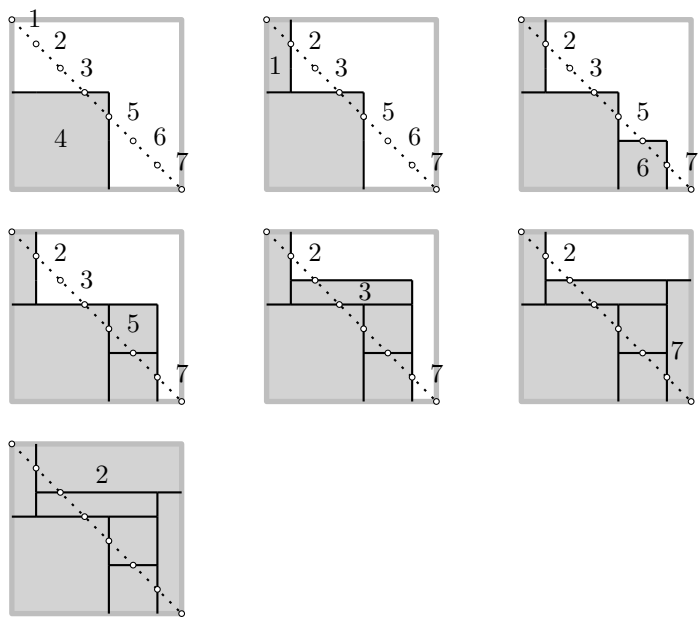}
\caption{\label{fig:map}The map $\rho$ from permutations to diagonal rectangulations. Construction of the diagonal rectangulation $\rho(4165372)$.}
\end{center}\end{figure}

\subsection{Maps from diagonal rectangulations to permutations}

Given the algorithm above, one can realize that many distinct permutations can yield the same rectangulation.
For instance, one can check on the example of Figure~\ref{fig:map} that rectangles 6 and 5 can both be drawn before rectangle 1, hence that $\rho (4165372) = \rho(4651372)$.
In general, given a diagonal rectangulation $R$, one can easily find a permutation $\pi$ such that $\rho(\pi)=R$ by moving backwards in the order in which the rectangles
are drawn. At each step, there always exists a rectangle in the rectangulation that is drawn correctly when the above procedure is applied.
In order to define a map from rectangulations to permutations, we need a tie-breaking rule, which allows to decide univocally which is the next rectangle to pick, and therefore to choose one well-defined element from each preimage $\rho^{-1}(R)$.
There are two simple rules we can apply: the leftmost and rightmost rules. In those rules, the next rectangle we pick is the one that is leftmost (respectively rightmost) on the diagonal. In all cases, the rectangles of the given rectangulation are first labeled in what we call the $\boxbackslash$-order, defined as the order in which they intersect the diagonal.

The preimage $\rho^{-1}(R)$ can in fact be interpreted as the set of common linear
extensions of the two binary trees corresponding to $R$: the tree above the main diagonal
oriented towards its root and the tree below the main diagonal oriented from its
root. In particular, the three tie-breaking rules are ways to chose between these
linear extensions. 

Let us first consider the leftmost rule, as illustrated on Figure~\ref{fig:bijtwB}. One can check that the permutation that is produced using the leftmost rule avoids the vincular patterns $3\underline{41}2$ and $2\underline{41}3$.
Indeed, in both cases, the leftmost rule prescribes that rectangle 1 is chosen before rectangle 4.
Permutations avoiding those two patterns are known as \emph{twisted Baxter} permutations.
In fact, only twisted Baxter permutations can be produced, and the map is known to be a bijection between diagonal rectangulations and twisted Baxter permutations~\cite{LR12}.

\begin{figure}\begin{center}
\includegraphics[page=9,scale=.6]{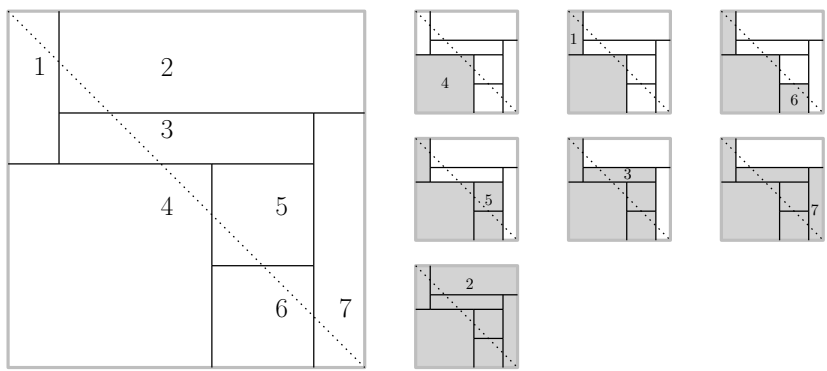}
\caption{\label{fig:bijtwB}Illustration of the map given by the leftmost order on the rectangulation $R$ of Figure~\ref{fig:diag}: the labels of the rectangles are given by the $\boxbackslash$-order (left), and listed in the leftmost order (right). The obtained twisted Baxter permutation is $4165372$.}
\end{center}\end{figure}

Similarly, the rightmost rule yields a bijection between diagonal rectangulations and $\{3\underline{14}2,2\underline{14}3\}$-avoiding permutations.
In fact, it can be shown that the preimage $\rho^{-1}(R)$ forms an interval, and the leftmost and rightmost rules select the bottom and top elements of these intervals respectively.

We now describe the tie-breaking rule that allows to define a bijection $\B$ between diagonal rectangulations and \emph{Baxter} permutations, which avoid the patterns $\{3\underline{14}2,2\underline{41}3\}$.
In order to define $\B$, we define another linear order on the rectangles of a rectangulation: the $\boxslash$-order. 
The $\boxslash$-order is obtained by taking the representative $\boxslash R$ of $R$ in the equivalence class of mosaic floorplans such that the bottom-left to top-right diagonal intersects every rectangle. 
From Lemma~\ref{lem:diagrep}, this representative exists and is unique.
The $\boxslash$-order is then simply the order in which this diagonal intersects the rectangles.
The Baxter permutation $\B(R)$ corresponding to a diagonal rectangulation $R$ is the order of the rectangle labels in the $\boxslash$-order, see Figure~\ref{fig:bijB}.
The map $B$ can then be described concisely as follows:
\begin{enumerate}
\item label the rectangles with respect to the $\boxbackslash$-order,
  \item enumerate the labels of the rectangles in the $\boxslash$-order.
  \end{enumerate}
The $\boxslash$-order is distinct from both the leftmost and the rightmost order, and can be shown to avoid the Baxter patterns.
An example illustrating this distinction is given on Figure~\ref{fig:orders}.
The following result is due to Ackerman et al.~\cite{ABP06}. In their proof, the description of the $\boxslash$-order involves \emph{block deletion} operations, but can be seen to be equivalent to ours.
\begin{theorem}
  The map $\B$ is a bijection between diagonal rectangulations with $n$ rectangles and Baxter permutations on $n$ elements.
  \end{theorem}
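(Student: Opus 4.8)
Since, by Lemma~\ref{lem:diagrep}, the $\boxslash$-representative $\boxslash R$ of a diagonal rectangulation $R$ exists and is unique, the map $\B$ is well defined; the plan for everything else is to exploit the fiber structure of $\rho$. As recalled above, $\rho$ surjects onto the set of diagonal rectangulations, and for a fixed $R$ the fiber $\rho^{-1}(R)$ is precisely the set of common linear extensions of the two twin binary trees of $R$. The $\boxslash$-order is one of these linear extensions, so applying $\rho$ to $\B(R)$ --- the word that lists the $\boxbackslash$-labels of the rectangles in the $\boxslash$-order --- recovers $R$; that is, $\rho(\B(R)) = R$ for every $R$. Since the fibers $\rho^{-1}(R)$ partition the set of permutations on $n$ elements, this already shows that $\B$ is injective. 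It therefore only remains to prove (i) that every $\B(R)$ avoids the patterns $3\underline{14}2$ and $2\underline{41}3$, so that the image of $\B$ is contained in the set of Baxter permutations, and (ii) that every Baxter permutation lies in that image.

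I would establish (i) by translating a forbidden pattern into geometry. An occurrence of $3\underline{14}2$ in the word $\B(R)$ is a quadruple of rectangles whose $\boxslash$-order and $\boxbackslash$-order together realise the pattern $3142$, with the two rectangles at the underlined positions consecutive in the $\boxslash$-order. This last, vincular condition is what makes the argument work: consecutiveness in the $\boxslash$-order forces those two rectangles to share a single wall, which is crossed by the bottom-left--to--top-right diagonal of $\boxslash R$. Locating the remaining two rectangles with respect to both diagonals then pins down a small configuration that must contain one of the two forbidden patterns of Figure~\ref{fig:forbdiag} --- either in $R$, or, after reflecting across the $\boxslash$-diagonal, in $\boxslash R$ --- contradicting Lemma~\ref{lem:diagchar}. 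The pattern $2\underline{41}3$ is dealt with by the reflection exchanging the two diagonals. I expect this case analysis --- enumerating the ways four rectangles of a diagonal rectangulation can be interleaved by the two diagonals --- to be the main obstacle.

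For (ii), the shortest route is a cardinality count. By Lemma~\ref{lem:diagrep} the diagonal rectangulations with $n$ rectangles are in bijection with the mosaic floorplans with $n$ rectangles, which are enumerated by the $n$-th Baxter number; this is also the number of Baxter permutations on $n$ elements. By (i) and the injectivity already shown, $\B$ is then an injective map between two sets of this common cardinality, hence a bijection. A self-contained alternative is induction on $n$: delete from $R$ the rectangle carrying the last letter of $\B(R)$ --- the last rectangle in the $\boxslash$-order --- and check that what remains is again a diagonal rectangulation $R'$, that $\B(R')$ is $\B(R)$ with its last letter removed and its labels renormalised, and that this deletion mirrors the removal of the last letter of a Baxter permutation, so that every Baxter permutation can be assembled one letter at a time as some $\B(R)$. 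This is essentially the ``block deletion'' recursion of Ackerman et al., recast in terms of the $\boxslash$-order. Either way, combining well-definedness, injectivity, (i) and (ii) shows that $\B$ is the claimed bijection.
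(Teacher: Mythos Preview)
The paper does not actually prove this theorem: it attributes the result to Ackerman, Barequet and Pinter~\cite{ABP06} and only remarks that their proof, phrased in terms of \emph{block deletion} operations, is equivalent to the $\boxslash$-order description given here. So there is no ``paper's own proof'' to compare against beyond that pointer.

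Your proposal is a reasonable outline, and in fact your inductive alternative for (ii) --- deleting the last rectangle in the $\boxslash$-order and recursing --- is precisely the block-deletion argument the paper alludes to. Two caveats. First, your cardinality shortcut for (ii) is in danger of circularity: the fact that mosaic floorplans are counted by the Baxter numbers is itself typically established via a bijection with Baxter permutations (this is what~\cite{Y03,ABP06} do), so invoking that count to prove the present bijection would beg the question unless you can point to a genuinely independent enumeration. The inductive route is therefore the honest one. Second, your argument for (i) is only a sketch: you assert that the vincular condition forces the two middle rectangles to share a wall crossed by the $\boxslash$-diagonal, and that locating the outer two rectangles then produces a forbidden configuration, but you do not carry out the case analysis. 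This is indeed where the work lies, and the cited paper~\cite{ABP06} handles it via the block-deletion recursion rather than a direct pattern-to-geometry translation; either approach can be made to work, but neither is as short as your paragraph suggests.
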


\begin{figure}\begin{center}
\includegraphics[page=2,scale=.6]{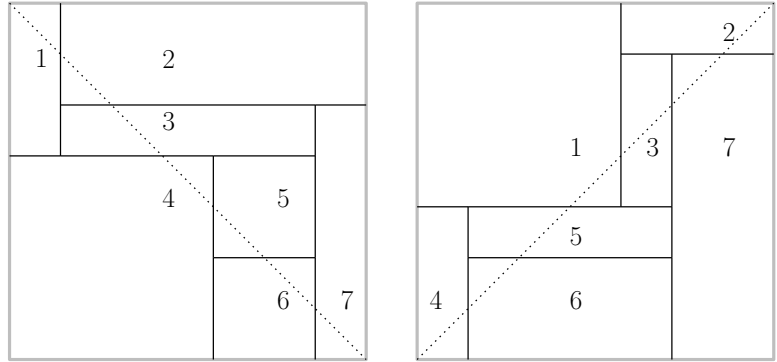}
\caption{\label{fig:bijB}Illustration of the map $\B$ on the rectangulation $R$ of Figure~\ref{fig:diag}: the labels of the rectangles are given by the $\boxbackslash$-order (left), and listed in the $\boxslash$-order (right). The rectangulation on the right is in the same equivalence class of mosaic floorplans as the one on the left. The obtained Baxter permutation is $\B(R)=4651372$.}
\end{center}\end{figure}

\begin{figure}\begin{center}
\includegraphics[page=13,scale=.6]{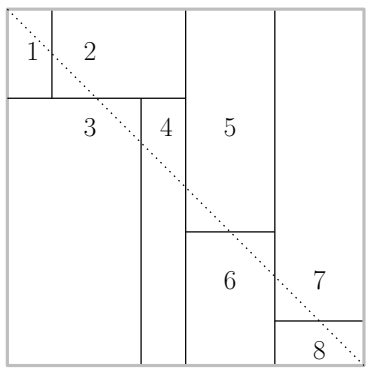}
\caption{\label{fig:orders}Example of diagonal rectangulation in which the leftmost, rightmost, and $\boxslash$-orders are all distinct. The leftmost order, hence the twisted Baxter permutation is $31426587$, the rightmost order is $34126857$, and the $\boxslash$-order, hence the Baxter permutation, is $34126587$.}
\end{center}\end{figure}

\subsection{Inversion}
We now give a relation between rectangulations produced by a Baxter permutation $\pi$ and its inverse $\pi^{-1}$. 
Note that inverses of Baxter permutations are also Baxter permutations.
The map $\rho$ from permutations of $n$ elements to diagonal rectangulations with $n$ rectangles is defined by iteratively drawing the rectangle given by the next element of the sequence on the main diagonal. We define a similar map $\rho'$ that produces a rectangulation in which the rectangles intersect the other, bottom-left to top-right, diagonal. The map $\rho'$ is simply defined as the composition of $\rho$ with a reflection with respect to the horizontal axis.
We observe that applying this map to the inverse permutation $\pi^{-1}$ yields the alternative diagonal representation of $\rho(\pi)$.

\begin{lemma}
\label{lem:inversion}
Let $\pi = B(R)$ for a rectangulation $R$. Then $\rho'(\pi^{-1}) = \boxslash R$.
\end{lemma}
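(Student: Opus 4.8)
The plan is to reduce the statement to a fact about the bijection $\B$ and the two canonical ways of drawing a mosaic floorplan on a diagonal. Write $\phi$ for the reflection in the horizontal axis, so that $\rho' = \phi\circ\rho$ by definition and hence $\rho'(\pi^{-1}) = \phi\bigl(\rho(\pi^{-1})\bigr)$. The basic observation about $\phi$ is that it exchanges the main diagonal with the bottom-left to top-right diagonal, carrying the top-left corner to the bottom-left one and the bottom-right corner to the top-right one; so although $\phi$ reverses orientation, it preserves the order in which the rectangles of a rectangulation are met along either diagonal. In particular $\phi$ turns a diagonal rectangulation into one whose rectangles all meet the anti-diagonal (and vice versa), and it sends the unique (Lemma~\ref{lem:diagrep}) diagonal representative of a mosaic floorplan $F$ to the unique representative of $\phi(F)$ whose rectangles all meet the anti-diagonal. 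Setting $\bar S := \phi(\boxslash R)$ --- a diagonal rectangulation --- and applying $\phi$ to both sides, the claim $\rho'(\pi^{-1}) = \boxslash R$ becomes equivalent to $\rho(\pi^{-1}) = \bar S$.

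Since $\B(R)\in\rho^{-1}(R)$ we have $\rho\circ\B=\mathrm{id}$, and $\B$ is a bijection onto the Baxter permutations, so $\rho$ restricted to Baxter permutations is the inverse of $\B$. As $\rho(\pi^{-1})$ is a diagonal rectangulation, it is therefore enough to show that $\B(\bar S) = \pi^{-1}$: then $\rho(\pi^{-1}) = \rho\bigl(\B(\bar S)\bigr) = \bar S$, and finally $\rho'(\pi^{-1}) = \phi(\bar S) = \phi(\phi(\boxslash R)) = \boxslash R$. This identity $\B(\bar S) = \pi^{-1}$ is the heart of the matter, and the proof is a careful tracking of labels. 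By definition of $\B$, the permutation $\pi = \B(R)$ is precisely the one converting $\boxbackslash$-positions of $R$ into $\boxslash$-positions: the rectangle of $R$ with $\boxbackslash$-label $j$ occupies position $\pi^{-1}(j)$ along the anti-diagonal of $\boxslash R$. Applying $\phi$, and using that it preserves the order of crossings, the same rectangle occupies position $\pi^{-1}(j)$ along the main diagonal of $\bar S$; hence its $\boxbackslash$-label in $\bar S$ equals $\pi^{-1}(j)$. On the other hand, the representative of the floorplan of $\bar S$ whose rectangles all meet the anti-diagonal is $\phi(R)$, and along its anti-diagonal the rectangles occur in the same order as along the main diagonal of $R$, namely in $R$'s $\boxbackslash$-order $1, 2, \dots, n$. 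Reading off, for these rectangles in this order, their $\boxbackslash$-labels in $\bar S$, one gets the word $\pi^{-1}(1)\,\pi^{-1}(2)\cdots\pi^{-1}(n)$; that is, $\B(\bar S)=\pi^{-1}$.

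The only subtle point is the bookkeeping in the second paragraph: one has four labelings in play --- the $\boxbackslash$- and $\boxslash$-orders of $R$ and those of $\bar S$ --- and at each step one must invoke the fact that the orientation-reversing map $\phi$ nonetheless preserves the order in which rectangles are crossed along a diagonal. Everything else is formal, given that mosaic floorplans are preserved by $\phi$ (a horizontal wall slide is taken to a horizontal wall slide) and that $\rho$ and $\B$ are mutually inverse bijections between diagonal rectangulations and Baxter permutations.
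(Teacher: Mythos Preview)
Your proof is correct. Both arguments exploit the symmetry between the two diagonals, but they organise the bookkeeping differently. The paper introduces a dual map $B'$ (swap the roles of the $\boxbackslash$- and $\boxslash$-orders in the definition of $B$), observes that exchanging indices and values gives $B'(R)=\pi^{-1}$, and then appeals informally to the analogy ``$\rho'$ undoes $B'$ just as $\rho$ undoes $B$'' to conclude $\rho'(\pi^{-1})=\boxslash R$. You instead make the symmetry explicit via the reflection $\phi$, reduce to the identity $B(\phi(\boxslash R))=\pi^{-1}$, and close with the already-established fact $\rho\circ B=\mathrm{id}$. Under the correspondence $\bar S=\phi(\boxslash R)$ your computation of $B(\bar S)$ is exactly the paper's computation of $B'(R)$, so the two arguments are the same at heart; your version is more explicit and avoids having to reargue that $\rho'$ inverts $B'$, at the cost of tracking four orderings through $\phi$. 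The one implicit ingredient you lean on---that the $\boxslash$-order is itself a valid tie-breaking rule, hence $B(R)\in\rho^{-1}(R)$---is asserted in the paper's description of $B$, so you are on solid ground.
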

\begin{proof}
  Consider the map $B'$ from rectangulations to permutations defined as follows:
\begin{enumerate}
\item label the rectangles with respect to the $\boxslash$-order,
  \item enumerate the labels of the rectangles in the $\boxbackslash$-order.
  \end{enumerate}
Note that this matches the description of the map $B$, except that we exchanged the roles of the two orders.
Since the roles of the indices and the elements are now exchanged, we must have that $B'(R)=\pi^{-1}$.
Now applying $\rho'$ on the permutation $\pi^{-1}$ amounts to inserting the rectangles in the order given by $\pi^{-1}$
along the other, bottom-left to top-right diagonal.
But since the rectangles were labeled by $B'$ in the $\boxslash$-order, this must yield $\boxslash R$.
\end{proof}

\section{Flips}
\label{sec:flips}

In this section, we present a geometric notion of flips in diagonal rectangulations. 
We consider only flipping edges that are not part of the boundary of the square.
We say that an edge is {\em matched} at one of its endpoint whenever this endpoint is incident to another edge with the same (horizontal/vertical) orientation.

\begin{figure}
\begin{center}
  \begin{subfigure}{.7\textwidth}
\begin{center}
\includegraphics[page=3,scale=.5]{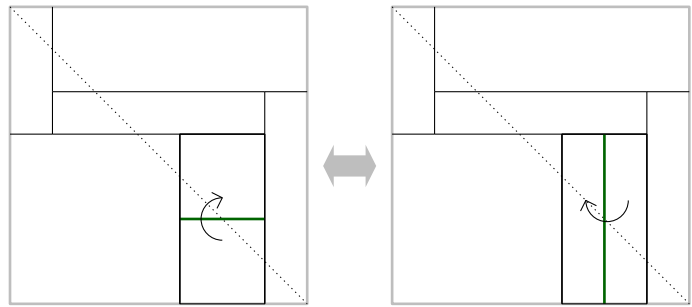}
\caption{\label{fig:triv}Simple flip.}
\end{center}
\end{subfigure}
\begin{subfigure}{.7\textwidth}
\begin{center}
\includegraphics[page=6,scale=.5]{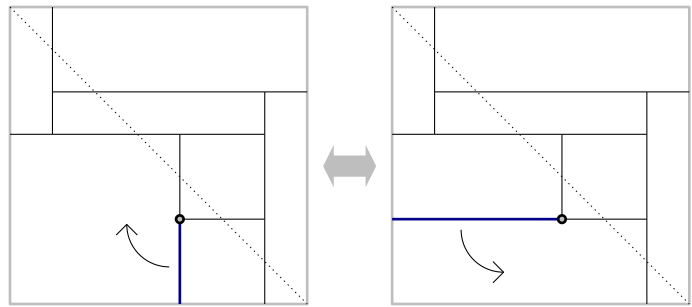}
\caption{\label{fig:flip1}Flip involving an edge that does not intersect the diagonal.}
\end{center}
\end{subfigure}
\begin{subfigure}{.7\textwidth}
\begin{center}
\includegraphics[page=7,scale=.5]{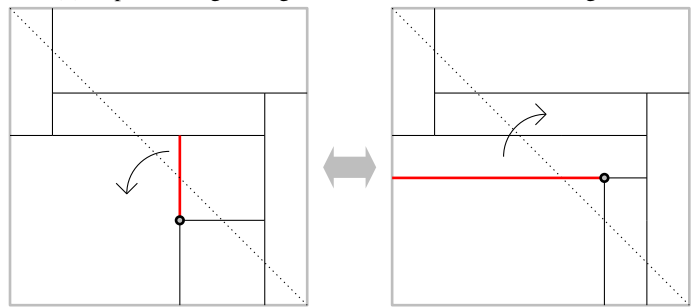}
\caption{\label{fig:flip2}Flip involving an edge intersecting the diagonal.}
\end{center}
\end{subfigure}
\end{center}
\caption{\label{fig:flips}The three kinds of flips in a diagonal rectangulation.}
\end{figure}

\subsection{Simple flips} 

Simple flips involve edges that separate two adjacent rectangles 
whose union is a rectangle itself. These are precisely the edges that are unmatched at both endpoints. 
All such edges must intersect the diagonal.
A simple flip consists in replacing such a horizontal edge by a vertical one, or vice versa.
When replacing the edge, we can always do it in such a way that the resulting rectangulation remains diagonal.
An example of simple flip in the rectangulation of Figure~\ref{fig:diag} is given in Figure~\ref{fig:triv}.

It is perhaps worth noting that flipping those edges is not sufficient to connect any pair of diagonal rectangulations. 
In other words, the simple flip graph is not connected.
Two rectangulations that differ only by simple flips have been called {\em S-equivalent} by Asinowski et al.~\cite{ABBMP13}, and the corresponding equivalence classes are shown to be in bijection with the $\{3\underline{41}2,2\underline{14}3\}$-avoiding permutations.

\subsection{Flips using rotations}

In some cases, an edge that is matched only at one of its endpoints can be \emph{rotated} around this endpoint to yield another diagonal rectangulation. Examples of such flips are given in Figures~\ref{fig:flip1} and~\ref{fig:flip2}. Flips of the kind given in Figure~\ref{fig:flip1} are exactly rotations in one of the twin binary trees.

Note that when such a flip involves an edge that intersects the main diagonal, like in Figure~\ref{fig:flip2}, merely replacing the rotated edge in a drawing of the rectangulation does not yield a drawing of the rectangulation that is diagonal, that is, some rectangles do not intersect the diagonal anymore. However, the rectangulation remains a proper diagonal rectangulation in the combinatorial sense, because no wall slide is needed to make all rectangles intersect the diagonal.

Together, all these flip operations define a flip graph on the set of diagonal rectangulations.
However, not all edges can be flipped.
An edge is said to be unflippable in two cases.

\begin{figure}
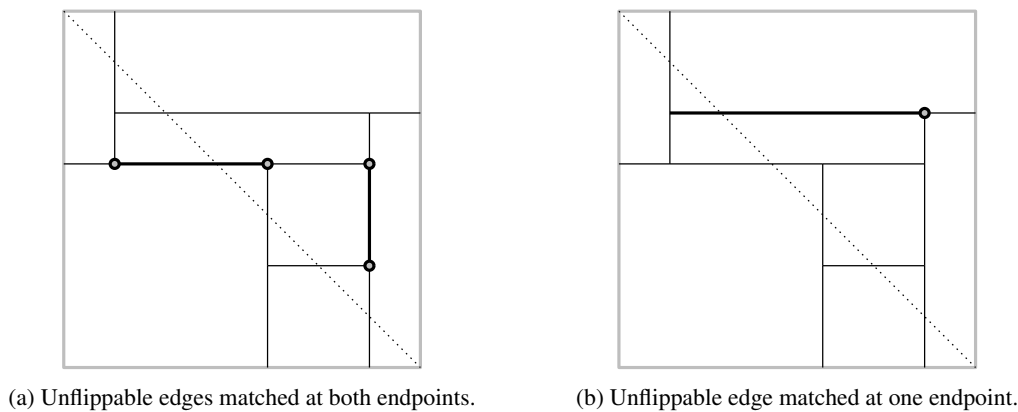

\centering
\begin{subfigure}{.5\textwidth}
  \centering
  \includegraphics[page=4,scale=.6]{figures.pdf}
  \caption{Unflippable edges matched at both endpoints.}
  \label{fig:unflip1}
\end{subfigure}%
\begin{subfigure}{.5\textwidth}
  \centering
  \includegraphics[page=5,scale=.6]{figures.pdf}
  \caption{Unflippable edge matched at one endpoint.}
  \label{fig:unflip2}
\end{subfigure}
\caption{Unflippable edges.}
\label{fig:unflip}
\end{figure}

\subsubsection{Unflippable edges matched at both endpoints.}
If the edge is matched at both endpoints, rotating this edge around any of the two endpoints
yields a partition that is not a rectangulation. Examples are shown in Figure~\ref{fig:unflip1}.

\subsubsection{Unflippable edges matched at one endpoint.}
It can also be the case that an edge is matched at only one endpoint, and rotating it around this endpoint 
yields a rectangulation, but the obtained rectangulation is not diagonal. 
An illustration is given in Figure~\ref{fig:unflip2}.
We have the following lemma characterizing such unflippable edges.

\begin{lemma}
\label{lem:unflip2char}
Unflippable edges matched at only one endpoint must fall in one of the four types described in Figure~\ref{fig:unflip2b}.
Furthermore, all of them must intersect the diagonal.
\end{lemma}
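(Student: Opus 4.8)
The plan is to reduce to a bounded local picture by symmetry, to settle the ``must cross the diagonal'' claim directly from the twin-binary-tree description, and then to classify the obstructions using the forbidden-configuration criterion of Lemma~\ref{lem:diagchar}. \emph{Normalisation.} Let $e$ be an edge matched at exactly one endpoint, say at $v$, and unmatched at the other endpoint $w$ (in particular $e$ is not a simple-flip edge, since those are unmatched at both ends). Using the reflection across the main diagonal --- which exchanges horizontal and vertical edges while sending diagonal rectangulations to diagonal rectangulations --- I may assume $e$ is horizontal. Then $v$ carries a collinear edge $e'$ and one perpendicular edge, so $v$ has type $\top$ or $\bot$, whereas $w$ lies in the interior of a vertical segment $W$, so $w$ has type $\vdash$ or $\dashv$; applying the $180^{\circ}$ rotation (also diagonality-preserving) I may further fix the side of $w$ on which $e$ lies. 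This leaves only a short list of local pictures. Let $R'$ be the rectangulation obtained by flipping $e$ (for an edge matched at exactly one endpoint, the flip is well defined and produces a rectangulation), and write $e^{*}$ for the new vertical edge of $R'$; the only point is to decide when $R'$ fails to be diagonal.

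\emph{The edge must meet the diagonal.} Suppose $e$ does not meet the main diagonal. Then $e$ lies strictly above or strictly below it, hence it is an internal edge of one of the two twin binary trees of $R$. By the remark following Figure~\ref{fig:flip1}, its flip is exactly a rotation in that tree, and a rotation turns a binary tree into a binary tree; thus $R'$ is again the diagonal rectangulation encoded by a pair of twin binary trees, so $e$ is flippable. Hence every unflippable edge matched at one endpoint crosses the main diagonal, and in particular every local picture from the previous step in which $e$ avoids the diagonal is discarded.

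\emph{Classification.} By Lemma~\ref{lem:diagchar}, $R'$ is not a diagonal rectangulation if and only if it contains one of the two forbidden configurations of Figure~\ref{fig:forbdiag}. Since $R$ is diagonal and $R'$ coincides with $R$ outside a bounded region around $e$, any forbidden configuration present in $R'$ must involve $e^{*}$ together with some of the rectangles incident to it. For each surviving local picture of $e$ I would enumerate the ways in which the rectangles around $e^{*}$ can be completed to a forbidden configuration, and translate each completion back into a constraint on the segments and rectangles around $e$ in $R$. This produces a finite list of obstructing patterns; after identifying those related by the symmetries used above and deleting those that cannot occur in a diagonal rectangulation, exactly four patterns remain --- the four types of Figure~\ref{fig:unflip2b} --- and by the previous step each of them has $e$ crossing the diagonal, which is the final assertion of the lemma.

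The step I expect to be the main obstacle is the bookkeeping in the classification: one has to pin down precisely how large the part of $R'$ that can carry a forbidden configuration is (so that the locality argument is watertight), check that no placement of either forbidden configuration relative to $e^{*}$ has been overlooked, and verify that the translation back to $R$ is faithful --- in particular that flipping $e$ can neither repair a forbidden configuration already present in $R$ (impossible, as $R$ is diagonal) nor create one away from $e$ (impossible, by locality). Confirming that exactly four patterns survive, rather than more, is where the symmetry reduction has to be used most carefully.
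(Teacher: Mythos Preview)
Your strategy is sound and lands on the same mechanism as the paper for the classification step: the flipped rectangulation $R'$ can fail to be diagonal only by exhibiting one of the two forbidden configurations of Lemma~\ref{lem:diagchar}, and since $R$ was diagonal, that configuration must involve the rotated edge; each of the two forbidden configurations can be created by exactly two rotations, giving the four types. Your symmetry reduction is a reasonable way to organise this enumeration.

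Where you genuinely diverge from the paper is in the ``must cross the diagonal'' claim. The paper first obtains the four types and then, for each, argues by contradiction: if the diagonal missed the edge, the original rectangulation $R$ would already contain a forbidden configuration. You instead argue up front, via the twin-tree picture, that any edge missing the diagonal lies in one of the two binary trees and its rotation is a binary-tree rotation, hence yields another diagonal rectangulation; so such an edge is flippable. This is a nice, conceptually cleaner route, and it has the side benefit of giving directly the fact later used in Lemma~5 (that edges off the diagonal matched at one endpoint are always flippable), which the paper derives \emph{from} the present lemma.

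One caution: the remark you cite after Figure~\ref{fig:flip1} is stated for edges that \emph{are} flippable (those of Law--Reading type), so invoking it verbatim for an a~priori possibly unflippable edge is slightly circular. The fix is easy: argue directly that an edge not meeting the diagonal is an internal edge of one of the two trees, that rotating it around its matched endpoint is the geometric realisation of a standard binary-tree rotation, and that the resulting pair of trees is again a twin pair (the other tree and the leaf labels are untouched). That is the substance of your argument and it stands on its own; just do not lean on the remark as stated.
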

\begin{proof}
By definition, flipping the edge must create one of the two configurations in Lemma~\ref{lem:diagchar}, shown in Figure~\ref{fig:forbdiag}.
Each of the two configurations can be forced to occur only after one of two edges have been rotated, hence can only  
happen in one of the four cases described.
The second statement can be proved by contradiction, by considering the four types of unflippable edges.
If, for one of them, the diagonal does not intersect the edge, then the rectangulation is not diagonal to start with.
\end{proof}

\begin{figure}\begin{center}
\includegraphics[page=12,scale=.45]{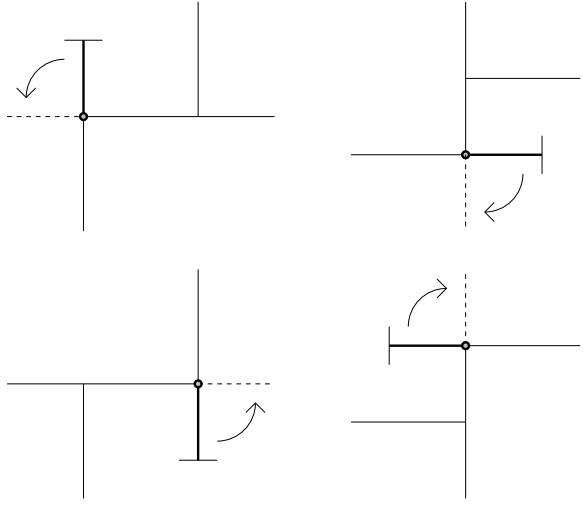}
\caption{\label{fig:unflip2b}The four types of unflippable edges matched at only one endpoint.}
\end{center}\end{figure}

All edges that are neither simply flippable nor unflippable according to the previous definitions can be flipped using a rotation to
get another diagonal rectangulation. 

\section{A complete combinatorial characterization of flips}
\label{sec:combinatorial}

In this section, we give a combinatorial characterization of all edges in the flip graph of diagonal rectangulations.
For this purpose, we will use some simple terminology on permutations.
A {\em transposition} maps a permutation $\pi = \pi (1) \pi (2)\ldots \pi(j) \ldots \pi(k)\ldots \pi (n)$ to a permutation $\pi'=\pi(1)\pi(2)\ldots \pi(k)\ldots \pi(j)\ldots \pi(n)$.
Furthermore, if the two values $j$ and $k$ satisfy $|\pi(j)- \pi(k)|=1$, then the transposition is said to be a {\em transposition of consecutive elements}.
If $k=j+1$, then the transposition is said to be an {\em adjacent transposition}.
Note that an adjacent transposition corresponds to a transposition of consecutive elements in the inverse permutation.

\subsection{Law-Reading flips}
\label{subsec:LR}

We first summarize a result of Law and Reading, characterizing some of the flip operations described above as a cover relation in a lattice,
which can be found in Section 7 of ~\cite{LR12}.
In what follows, we will use the term {\em Law-Reading flips} to refer to those flips.

In the original description, the set of Law-Reading flippable edges is constructed as follows: for every inner vertex, consider the two edges going towards (not necessarily intersecting) the diagonal. Consider the one that is matched and exclude it from the set. The remaining edges are Law-Reading flippable.
It can be checked that all Law-Reading flippable edges are flippable with respect to the definitions of Section~\ref{sec:flips}. The following lemma gives a simple alternative definition of Law-Reading flips.

\begin{lemma}
Law-Reading flips are exactly the flips that are either simple, or that involve the rotation of a flippable edge that does not intersect the diagonal, as illustrated in Figure~\ref{fig:flip1}.
\end{lemma}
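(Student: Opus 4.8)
The plan is to compare the Law--Reading construction directly against the classification of flips from Section~\ref{sec:flips}. Recall that the Law--Reading flippable edges are obtained by deleting, at every inner vertex $v$, the edge that is matched at $v$ among the two edges incident to $v$ that point towards the diagonal; an edge is Law--Reading flippable exactly when it is deleted at neither of its two endpoints. By Section~\ref{sec:flips}, every flippable edge is either simple (unmatched at both endpoints, hence crossing the diagonal), or matched at exactly one endpoint and not crossing the diagonal (the type of Figure~\ref{fig:flip1}), or matched at exactly one endpoint and crossing the diagonal (the type of Figure~\ref{fig:flip2}). Using the fact, recalled just above, that every Law--Reading flippable edge is flippable, it therefore suffices to prove three things: (A) every simple flip edge is Law--Reading flippable; (B) every edge of the type of Figure~\ref{fig:flip1} is Law--Reading flippable; (C) no edge of the type of Figure~\ref{fig:flip2} is Law--Reading flippable.

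The engine for all three is a local observation about the construction. At an inner vertex $v$, exactly one incident edge is unmatched --- the \emph{shaft} of the configuration at $v$ (one of $\vdash,\dashv,\top,\bot$) --- while the other two are collinear. A short case analysis over the four vertex types shows that the shaft always points towards the diagonal: using that each of the three rectangles incident to $v$ meets the diagonal, one checks that $v$ lies strictly on the side of the diagonal into which the shaft points (for a $\top$-vertex, say, the rectangle lying below and to the left of $v$ meets the diagonal, which forces $v$ strictly above it; the other types follow similarly, e.g. by central symmetry). Consequently the two edges pointing towards the diagonal at $v$ are exactly the shaft and the one of the two collinear edges whose direction decreases the signed distance to the diagonal, and the construction deletes the latter. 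In other words: for an endpoint $v$ of an edge $e$, the edge $e$ is deleted at $v$ if and only if $e$ is matched at $v$ and $e$ points towards the diagonal from $v$.

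Now for (C): if $e$ is flippable, crosses the diagonal, and is matched at an endpoint $v$, then from $v$ the edge $e$ reaches the diagonal, hence points towards it; being matched at $v$, it is deleted there, so it is not Law--Reading flippable. For (A): a simple flip edge is, at each of its endpoints, either the shaft or an endpoint on the boundary ignored by the construction, so it is deleted at neither endpoint and is Law--Reading flippable. For (B): let $e$ be flippable, matched only at the endpoint $v$, and not crossing the diagonal, with other endpoint $w$. At $w$ the edge $e$ is the shaft --- the case of $w$ on the boundary of the square does not arise, since edges of the type of Figure~\ref{fig:flip1} are internal edges of a twin binary tree --- so $e$ is not deleted at $w$. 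Since $e$ is unmatched at $w$, the vertex $w$ is an endpoint of the segment containing $e$; hence if $e$ pointed towards the diagonal from $v$, the signed distance to the diagonal would decrease strictly from $v$ to $w$ along $e$ (without changing sign, as $e$ does not cross the diagonal), so $e$ would point away from the diagonal at $w$, contradicting that the shaft at $w$ points towards it. Therefore $e$ points away from the diagonal at $v$ and is not deleted there, so $e$ is Law--Reading flippable. Together with the cited fact, (A)--(C) give the lemma.

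The step I expect to be most delicate is the local observation: fixing a consistent meaning of what it is for an edge at a vertex to point towards the diagonal, and checking uniformly --- over the four vertex types and both sides of the diagonal --- that the shaft always points that way, plus the modest bookkeeping needed to dispose of edges incident to the boundary of the square. Everything after that is short.
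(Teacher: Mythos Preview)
Your proof is correct and follows essentially the same approach as the paper. Both arguments rest on the same local fact---that at every inner vertex the unmatched edge (your ``shaft'') points towards the diagonal---and then run a short case analysis; the paper splits on whether the edge crosses the diagonal, while you split into (A)/(B)/(C), and the paper invokes Lemma~\ref{lem:unflip2char} where you instead cite the pre-stated fact that Law--Reading flippable implies flippable. Your formulation of the local observation is more explicit than the paper's (which uses it only implicitly via ``the endpoint closest to the diagonal''), but the content is the same. One small remark: your parenthetical dismissal of the case $w$ on the boundary in (B) is not really needed---the shaft-towards-diagonal property holds at boundary vertices as well, so your contradiction argument goes through regardless.
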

\begin{proof}
Consider an edge that intersects the diagonal. If this edge is Law-Reading flippable, then it must be unmatched at both endpoints, since otherwise one of the endpoints would lock it. Hence it must be simply flippable. Conversely, suppose that this edge is flippable, but not simply flippable. Then it must be matched at exactly one endpoint. But for this endpoint, the edge is towards the diagonal and therefore must be locked. Hence Law-Reading flips of edges intersecting the diagonal are exactly the simple flips.

Consider now an edge that does not intersect the diagonal. We need to show that it is flippable if and only if it is Law-Reading flippable. Suppose it is flippable. Then it must be matched at exactly one endpoint. This endpoint must be the closest to the diagonal, for otherwise the rectangulation is not diagonal. But then it cannot be locked, and is Law-Reading flippable. On the other hand, suppose it is Law-Reading flippable. Then it cannot be locked, and can only be matched at the endpoint that is the closest from the diagonal. From Lemma~\ref{lem:unflip2char}, unflippable edges matched at one endpoint must intersect the diagonal. Therefore this edge must be flippable.
\end{proof}

We now give a combinatorial characterization of Law-Reading flips proved in~\cite{LR12} using the map from rectangulations to Baxter permutations. Before stating the result, we must define the lattice $\dRec_n$ of diagonal rectangulations with $n$ rectangles. 

\subsubsection{A lattice on diagonal rectangulations.}
The {\em weak order} (also known as the weak {\em Bruhat} order) is a partial order on the set $S_n$ of permutations of $n$ elements in which a permutation $\pi$ is smaller than another permutation $\pi'$ whenever the set of inversions of $\pi$ is a subset of the set of inversions of $\pi'$.
The cover relation of the weak order is the set of pairs of permutations that differ by a single adjacent transposition. The weak order is a classical, well-studied order, and known to be a lattice. 

The lattice $\dRec_n$ on diagonal rectangulations can be defined as the restriction of the weak order to the Baxter permutations corresponding to diagonal rectangulations with $n$ rectangles.
In fact, it can be shown that the preimages $\rho^{-1}(R)$ of $\rho$ form a lattice congruence on the weak order. The lattice $\dRec_n$ is the quotient of the weak order with respect to this congruence. Therefore, $\dRec_n$ may as well be defined by restricting the weak order to any set of representatives of each congruence class. More concretely, we can pick for any rectangulation $R$ any representative in $\rho^{-1}(R)$ and consider the order induced by those. For instance, the partial order induced by the weak order on the twisted Baxter permutations is isomorphic to $\dRec_n$. 

We can now state the connection between this order and Law-Reading flips in rectangulations. Recall that $B(R)$ is the Baxter permutation associated with the diagonal rectangulation $R$.
 \begin{theorem}[Law and Reading~\cite{LR12}]
\label{thm:LR}
Let $R$ and $R'$ be two diagonal rectangulations. Then $R$ and $R'$ are connected by a Law-Reading flip if and only if
$\B(R)$ and $\B(R')$ are in a cover relation in $\dRec_n$.
\end{theorem}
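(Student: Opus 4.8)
The strategy is to lean on the description of $\dRec_n$ as the quotient of the weak order on $S_n$ by the lattice congruence $\Theta$ whose classes are the fibers $\rho^{-1}(R)$. For a finite lattice and a congruence, the cover relations of the quotient are exactly the images of the Hasse edges of the ambient lattice that join two distinct classes; since the Hasse edges of the weak order are the adjacent transpositions $\pi\mapsto\pi s_i$ (swapping positions $i$ and $i+1$), and since the leftmost rule identifies $\dRec_n$ with the weak order restricted to twisted Baxter permutations — the bottom elements of the classes, where the identification of a quotient lattice with a subposet is transparent — the theorem reduces to the following combinatorial-geometric statement: two diagonal rectangulations $R\neq R'$ are connected by a Law-Reading flip if and only if there are a permutation $\pi$ and an index $i$ with $\{\rho(\pi),\rho(\pi s_i)\}=\{R,R'\}$. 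This is the assertion of the theorem, since $\B(R)\in\rho^{-1}(R)$, so that $\B(R)$ and $\B(R')$ being in a cover relation in $\dRec_n$ means precisely that the classes $\rho^{-1}(R)$ and $\rho^{-1}(R')$ are.

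For the implication from transpositions to flips, I would examine the construction of $\rho$ on $\pi$ and on $\pi s_i$. The first $i-1$ steps build the same region $T_{i-1}$, and thereafter the rectangles on the intervals $\pi(i)$ and $\pi(i+1)$ are inserted in the two possible orders. The key ingredient is a confluence property of the insertion procedure: the region produced by inserting the rectangles on a prescribed \emph{set} of diagonal intervals does not depend on the insertion order. Hence $T_{i+1}$, and with it every subsequent region $T_j$, is the same for $\pi$ and for $\pi s_i$, so $\rho(\pi)$ and $\rho(\pi s_i)$ agree outside the region $D=T_{i+1}\setminus T_{i-1}$ — the union of the two inserted rectangles — and inside $D$ they are the two admissible ways of cutting $D$ into two rectangles. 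A short case analysis on the shape of $D$ then concludes: if $D$ is a rectangle, the two cuts differ by a simple flip; if $D$ is a hexagonal \emph{notched} region, they differ by the rotation of the common edge about its reflex corner, and one verifies that this rotated edge is disjoint from the main diagonal. By the lemma characterizing Law-Reading flips (simple flips together with rotations of flippable edges disjoint from the diagonal, Figure~\ref{fig:flip1}), $R$ and $R'$ are therefore connected by a Law-Reading flip.

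For the converse, starting from a Law-Reading flippable edge $e$ of $R$, I would construct a representative $\pi\in\rho^{-1}(R)$ in which the two rectangles bounded by $e$ are inserted at consecutive steps $i$ and $i+1$; then $\pi s_i$ is a preimage of the rectangulation $R'$ obtained by flipping $e$, and the reduction of the first paragraph applies. If $e$ is a simple-flip edge, the union of its two rectangles is a rectangle, and refining any preimage of the $(n-1)$-rectangle rectangulation obtained by contracting it — inserting the two rectangles consecutively, in either order, at the contracted position — produces preimages of $R$ and of $R'$ differing by an adjacent transposition. If $e$ is a flippable edge disjoint from the diagonal, the flip is a rotation in one of the two twin binary trees, hence swaps a node with one of its children; since $\rho^{-1}(R)$ is exactly the set of common linear extensions of the two trees, some such extension places these two nodes consecutively, and transposing them performs the rotation and lands in $\rho^{-1}(R')$. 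Incidentally, under inversion of permutations — equivalently, reflection of the rectangulation, Lemma~\ref{lem:inversion} — adjacent transpositions become transpositions of consecutive \emph{values}; this accounts for the symmetry between Theorem~\ref{thm:LR} and the characterization of the remaining flips in Section~\ref{subsec:BCN}, whose proof runs dually.

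The step I expect to be the main obstacle is the geometric case analysis of the second paragraph, in particular the verification that an adjacent transposition altering the rectangulation can only produce a simple flip or the rotation of an edge \emph{disjoint} from the diagonal, and never the rotation of a diagonal-crossing edge. Turning the confluence property and the localization to $D$ into precise statements, and checking uniformly over the few possible shapes of $D$ (and their orientations, modulo the reflection symmetry) that the modified edge never crosses the diagonal, is where the actual work lies; by comparison the quotient-lattice bookkeeping of the first paragraph and the lifting constructions of the third are routine.
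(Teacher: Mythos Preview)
The paper does not supply its own proof of Theorem~\ref{thm:LR}: the statement is quoted from Law and Reading~\cite{LR12} and used as a black box (see the sentence preceding the theorem, ``proved in~\cite{LR12}''). There is therefore no in-paper proof to compare your proposal against.

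That said, your outline is essentially the strategy of~\cite{LR12}. The reduction in your first paragraph---that covers in the quotient by a lattice congruence are precisely images of covers in the ambient lattice joining distinct classes---is the standard fact about lattice congruences that Law and Reading exploit, and identifying $\dRec_n$ with the weak order restricted to the bottom (twisted Baxter) representatives is exactly their setup. The geometric content, which you correctly flag as the real work, is their Proposition~7.1 and the surrounding analysis: an adjacent transposition $\pi\mapsto\pi s_i$ either stays in the same fiber or produces a Law-Reading flip, and conversely every Law-Reading flip arises this way. Your confluence-of-insertion argument and the case split on the shape of $D$ are a reasonable way to organize that verification; the delicate point you identify---ruling out that an adjacent transposition produces a rotation of a diagonal-crossing edge---is indeed where care is needed, and in~\cite{LR12} this is handled via the explicit description of which edges are ``locked'' by each inner vertex.

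One small caution on your third paragraph: for the rotation case you assert that some common linear extension of the twin trees places the two relevant nodes consecutively. This is true, but it requires a short argument (the two nodes form a cover in one tree and are incomparable in the other, so one can build such an extension), and you should say so rather than leave it implicit.
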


This means that the two Baxter permutations corresponding to the pair of rectangulations are related by a monotone sequence of adjacent transpositions, and the intermediate permutations, if any, are not Baxter permutations. Note that the Law-Reading flips have a simple interpretation in the representation of a rectangulation by twin binary trees. A Law-Reading flip then corresponds to a rotation in one of the two binary tree (see Section 5.3 in Giraudo~\cite{G12}).

\subsection{Barcelona flips}
\label{subsec:BCN}

We define Barcelona flips as those flips that involve a flippable edge intersecting the main diagonal. Barcelona flips are either simple flips, 
or flips involving the rotation of an edge intersecting the diagonal, as shown in Figure~\ref{fig:flip2}.

\subsubsection{Barcelona flips in $R$ and Law-Reading flips in $\boxslash R$.}

\begin{figure}\begin{center}
\includegraphics[page=3,scale=.6]{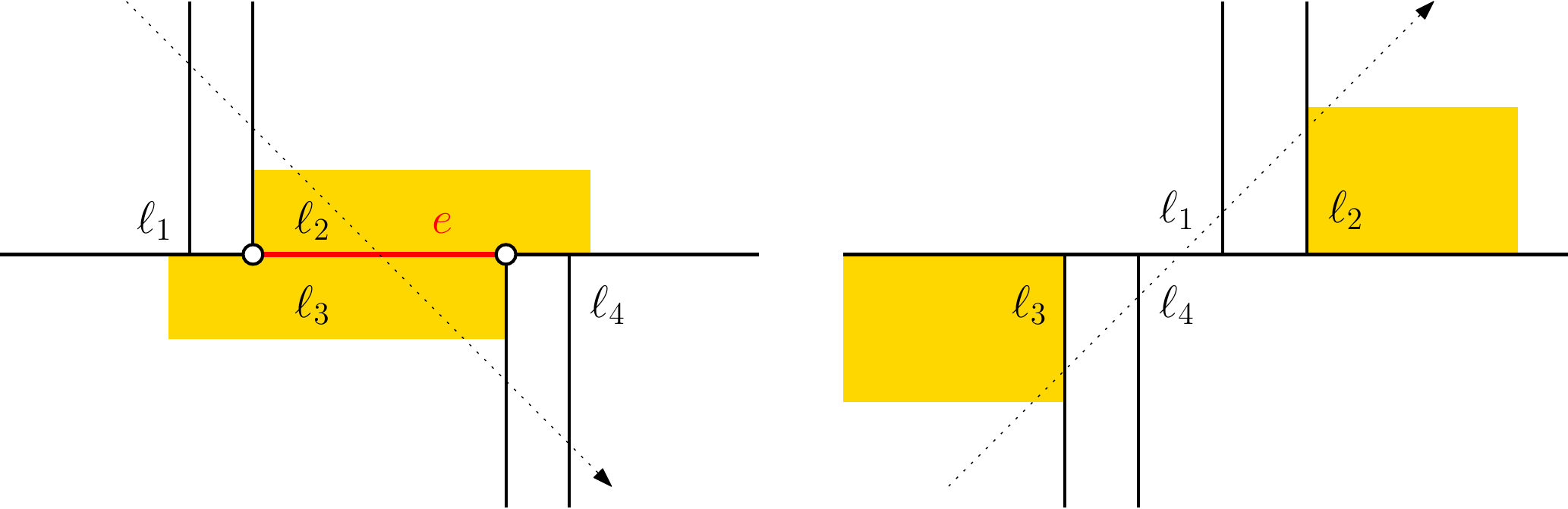}
\caption{\label{fig:proofa}Illustration of the proof of Lemma~\ref{lem:switch}. The $\boxbackslash$ and $\boxslash$-orders of the two rectangles $a$ and $b$ are indicated by the dotted arrows.}
\end{center}\end{figure}

\begin{lemma}
\label{lem:switch}
Let $R$ and $R'$ be two diagonal rectangulations that are connected by a Barcelona flip.
Then $\boxslash R$ and $\boxslash R'$ are connected by a Law-Reading flip.
\end{lemma}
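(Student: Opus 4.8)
The plan is to argue geometrically, transferring the flip from $R$ to $\boxslash R$ edge by edge. Recall that $R$ and $\boxslash R$ are two representatives of the same mosaic floorplan, related by wall slides, and that the $\boxbackslash$-labelling of the rectangles, the segments, and the horizontal/vertical adjacency relation between any two rectangles are all invariants of the mosaic floorplan. Since the edge $e$ flipped by a Barcelona flip crosses the main diagonal, the two rectangles $a$ and $b$ it separates are consecutive in the $\boxbackslash$-order; call their labels $k$ and $k+1$. Note also that the edge $e'$ of $\boxslash R$ whose two sides carry $a$ and $b$ is again a single edge, of the same orientation as $e$, and that $e'$ crosses the anti-diagonal if and only if $a$ and $b$ are consecutive in the $\boxslash$-order, i.e. if and only if $k$ and $k+1$ occupy consecutive positions of $\B(R)$.

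I would first handle the simple-flip case. If the Barcelona flip is simple then $a\cup b$ is a rectangle and $e$ is a full segment, hence carries no interior vertex and is untouched by wall slides; so in $\boxslash R$ the rectangles $a$ and $b$ still bound a rectangle $a\cup b$ split by $e$. Re-cutting $a\cup b$ in the orthogonal direction, with the orientation that keeps every rectangle meeting the anti-diagonal, produces a rectangulation that lies in the mosaic floorplan of $R'$ and in which every rectangle meets the anti-diagonal; by Lemma~\ref{lem:diagrep} this rectangulation is $\boxslash R'$. Being simple, this flip is a Law-Reading flip.

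In the remaining case $e$ is matched at exactly one endpoint $v$ and is rotated around it; it is one of the configurations of Figure~\ref{fig:flip2}. Here $e$ lies on a segment strictly longer than $e$, so the same is true of $e'$ in $\boxslash R$. I would check, by inspecting the handful of local configurations as in Figure~\ref{fig:proofa}, that the analogous rotation performed on $e'$ in $\boxslash R$ keeps all rectangles meeting the anti-diagonal and yields a rectangulation in the mosaic floorplan of $R'$, which by Lemma~\ref{lem:diagrep} is $\boxslash R'$. It then remains to see that $e'$ does not cross the anti-diagonal, equivalently that $k$ and $k+1$ are not in consecutive positions of $\B(R)$; for this one exhibits, in each configuration, a rectangle lying strictly between $a$ and $b$ in the $\boxslash$-order — it can be located near the matched endpoint $v$, on the far side of the prolongation of $e$ beyond $v$. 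Hence the move in $\boxslash R$ is the rotation of a flippable edge not crossing the anti-diagonal, which is a Law-Reading flip.

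The main obstacle is the third step: one must match the local picture around $e$ in $R$ with the local picture around $e'$ in $\boxslash R$ (checking that the wall slides relating the two representatives leave this neighbourhood essentially alone), and then verify, over the configurations distinguished by the orientation of $e$ and by which endpoint is matched, both that the rotated rectangulation is exactly $\boxslash R'$ and that $e'$ avoids the anti-diagonal. This case analysis is precisely what Figure~\ref{fig:proofa} is meant to support.
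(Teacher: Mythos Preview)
Your plan matches the paper's two-case split: simple flips stay simple, and non-simple Barcelona flips become non-simple Law-Reading flips in $\boxslash R$. The simple case is handled the same way.

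For the non-simple case the overall route is also the paper's, but you are missing the one observation that makes the case analysis short. You say one must check that ``the wall slides \ldots leave this neighbourhood essentially alone'' and defer this to an inspection of configurations. The paper gives the reason concretely: the validity of the flip forces the types of the \emph{outer} corners of $a$ and $b$ --- in one configuration the top-left corners of $a$ and $b$ must be $\top$ and $\vdash$, in the other the bottom-right corners must be $\bot$ and $\dashv$ --- because any other type would place one of the forbidden configurations of Figure~\ref{fig:forbdiag} in $R$ or in $R'$. These forced vertex types are precisely what prevents any wall slide from disturbing the relative position of $a$ and $b$. Without this observation, even your preliminary claim that $a$ and $b$ remain adjacent in $\boxslash R$ (so that $e'$ exists at all, as a single edge of the same orientation) is not justified, since wall slides can in general destroy adjacencies.

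Once the L-shape of $a\cup b$ is known to survive in $\boxslash R$, the paper reads off directly that the anti-diagonal misses $e'$ and that $e'$ is still matched at exactly one endpoint, and then invokes Lemma~\ref{lem:unflip2char} to conclude flippability. Your additional step --- exhibiting a rectangle strictly between $a$ and $b$ in the $\boxslash$-order to certify that $e'$ avoids the anti-diagonal --- is a legitimate alternative but unnecessary once the configuration is pinned down; and your proposed location for that rectangle (``on the far side of the prolongation of $e$ beyond $v$'') only singles out the correct rectangle after the forced vertex types have told you which of $a$ or $b$ actually extends past $v$.
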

\begin{proof}
We first consider the case where the Barcelona flip is a simple flip. Then the edge is unmatched a both endpoints, and remain so in $\boxslash R$. Hence $\boxslash R$ and $\boxslash R'$ are connected by a simple flip as well.

In the case where the Barcelona flip is not simple, it must involve two rectangles with labels $a$ and $b$ that can be in two possible distinct relative positions, as depicted in Figure~\ref{fig:proofa}.

We first remark that in the configuration on the left of Figure~\ref{fig:proofa} in $R$, the top left corners of $a$ and $b$ must respectively be $\top$ and $\vdash$ vertices. This is because otherwise the rectangulation after or before the flip contains one of the two forbidden configurations of Figure~\ref{fig:forbdiag} and cannot be diagonal. Similarly in the configuration on the right, the bottom right corners of $a$ and $b$ must respectively be $\bot$ and $\dashv$ vertices. Hence the relative position of the two rectangles $a$ and $b$ cannot be changed by wall slides, and remains the same in $\boxslash R$. This in turn implies that the other diagonal in $\boxslash R$ does not intersect the flipped edge, and that this edge is still matched at only one endpoint. Since it does not intersect the diagonal, Lemma~\ref{lem:unflip2char} implies that the edge is flippable in $\boxslash R$, and the flip is a Law-Reading flip. Applying the same reasoning starting with $R'$, we conclude that flipping this edge in $\boxslash R$ yields the rectangulation $\boxslash R'$.
\end{proof}

\begin{figure}\begin{center}
\includegraphics[page=15,scale=.6]{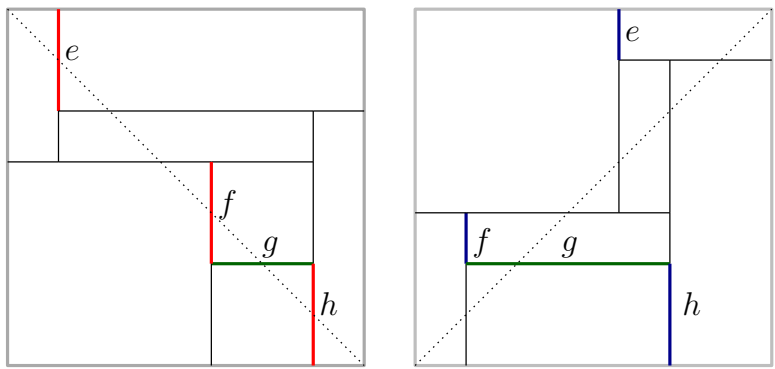}
\caption{\label{fig:BCNtoLR}Illustration of Lemma 6: edges $e,f,g,h$, that can be flipped by a Barcelona flip in $R$ (left) can be flipped by a Law-Reading flip in $\boxslash R$ (right).}
\end{center}\end{figure}

The Lemma is illustrated in Figure~\ref{fig:BCNtoLR}.
Combining the above lemma with Lemma~\ref{lem:inversion} on the way to obtain $\boxslash R$ from $B(R)$, and the characterization of Law-Reading flips in Theorem~\ref{thm:LR}, one can already conclude that a Barcelona flip in a rectangulation $R$ corresponds to a sequence of adjacent transpositions in the inverse permutation $B(R)^{-1}$. It is perhaps tempting to conjecture at this stage that Barcelona and Law-Reading flips are exactly dual to each other, in the sense that the set of Barcelona flips in $R$ is in bijection with the set of Law-Reading flips in $\boxslash R$. This is not the case. In what follows, we show that the Barcelona flips are in correspondence with the Law-Reading flips in $\boxslash R$ that involve a {\em single} adjacent transposition in $B(R)^{-1}$, that is, a single transposition of consecutive elements in $B(R)$. Law-Reading flips in $\boxslash R$ that involve more than one transpositions do not have a direct interpretation in terms of flips in $R$.

\subsubsection{A characterization of Barcelona flips.}

\begin{theorem}
\label{thm:main}
Let $R$ and $R'$ be two diagonal rectangulations. Then $R$ and $R'$ are connected by a Barcelona flip if and only if
$\B(R)$ and $\B(R')$ differ by a single transposition of consecutive elements.
\end{theorem}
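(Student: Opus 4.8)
The plan is to combine three ingredients already available: Lemma~\ref{lem:switch} (a Barcelona flip in $R$ induces a Law-Reading flip in $\boxslash R$), Lemma~\ref{lem:inversion} (which identifies $\boxslash R$ with the rectangulation $\rho'(\B(R)^{-1})$, so that the Baxter permutation of $\boxslash R$ is essentially $\B(R)^{-1}$), and Theorem~\ref{thm:LR} (Law-Reading flips in $\boxslash R$ correspond to cover relations in $\dRec_n$, i.e.\ monotone sequences of adjacent transpositions in $\B(\boxslash R)$, with non-Baxter intermediates). Chaining these, a Barcelona flip between $R$ and $R'$ translates to a monotone sequence of adjacent transpositions taking $\B(R)^{-1}$ to $\B(R')^{-1}$; dualizing (an adjacent transposition in a permutation is a transposition of consecutive elements in its inverse, as noted just before Section~\ref{subsec:LR}), this is a monotone sequence of transpositions of consecutive elements in $\B(R)$.

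First I would prove the forward direction and, crucially, the claim that the sequence has length exactly one. The new content beyond Lemma~\ref{lem:switch} is geometric: I would look closely at the two rectangles $a,b$ involved in a non-simple Barcelona flip (Figure~\ref{fig:proofa}) and track how their $\boxbackslash$-labels and their $\boxslash$-positions relate. The key observation should be that because the flipped edge intersects the main diagonal, the labels $a$ and $b$ are \emph{consecutive} in the $\boxbackslash$-order (the rectangles immediately before and after the diagonal-crossing point), hence the values $\B(R)(j)$ and $\B(R)(k)$ that get swapped differ by exactly $1$; and symmetrically, in $\boxslash R$ these two rectangles are adjacent along the $\boxslash$-diagonal, so the corresponding transposition in $\B(\boxslash R)=\B(R)^{-1}$ is a single adjacent transposition — not a longer monotone chain. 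This pins down that a Barcelona flip yields a single transposition of consecutive elements, not merely a monotone sequence of them. The simple-flip case is handled separately and is easy: the two merged rectangles are $\boxbackslash$-consecutive, and a simple flip in $R$ is a simple flip in $\boxslash R$ by the first paragraph of Lemma~\ref{lem:switch}'s proof, which is again one adjacent transposition in $\B(R)^{-1}$.

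For the converse, suppose $\B(R)$ and $\B(R')$ differ by a single transposition of consecutive elements, i.e.\ $\B(R)^{-1}$ and $\B(R')^{-1}$ differ by a single adjacent transposition; since both are Baxter, this is a cover relation in $\dRec_n$ with no intermediate step, so by Theorem~\ref{thm:LR} the rectangulations $\boxslash R$ and $\boxslash R'$ are connected by a Law-Reading flip — moreover one realized by a \emph{single} adjacent transposition. I would then argue that such a single-transposition Law-Reading flip in $\boxslash R$ is exactly (the $\boxslash$-image of) a Barcelona flip in $R$: run the argument of Lemma~\ref{lem:switch} in reverse, using that the two rectangles whose positions swap are $\boxslash$-adjacent, hence $\boxbackslash$-consecutive in $R$, hence the flipped edge crosses the main diagonal in $R$ and the flip is Barcelona. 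The distinction emphasized in the text — that longer monotone chains of adjacent transpositions give Law-Reading flips in $\boxslash R$ with \emph{no} $R$-side interpretation — is what forces the "single transposition" qualifier and is exactly what we exploit here.

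The main obstacle I expect is the bookkeeping in the second paragraph: rigorously establishing that the edge intersecting the diagonal in $R$ forces the two involved rectangles to be $\boxbackslash$-consecutive and simultaneously $\boxslash$-adjacent, and that no other rectangle's relative order changes under the flip (so the permutation change is a single transposition and not a product). This requires a careful case analysis of the local configurations in Figure~\ref{fig:proofa} together with the forbidden-configuration characterization (Lemma~\ref{lem:diagchar}) and the wall-slide rigidity already used in the proof of Lemma~\ref{lem:switch}; the rest is a formal chase through Lemmas~\ref{lem:inversion}, \ref{lem:switch} and Theorem~\ref{thm:LR}.
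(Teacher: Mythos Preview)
Your forward direction is essentially the paper's: both invoke Lemma~\ref{lem:switch} and then observe that the $\boxslash$-positions of the two rectangles are unchanged while their $\boxbackslash$-labels (which are consecutive because the flipped edge meets the main diagonal) get swapped, giving exactly one transposition of consecutive elements. The paper phrases this directly rather than through $\B(R)^{-1}$, but the content is the same.

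The converse is where you diverge from the paper, and where there is a genuine gap. You deduce correctly that $\boxslash R$ and $\boxslash R'$ are connected by a Law--Reading flip realized by a single adjacent transposition, and that the two rectangles involved carry consecutive $\boxbackslash$-labels $a,a{+}1$, hence share an edge $e$ crossing the main diagonal in $R$. But you then conclude ``hence the flipped edge crosses the main diagonal in $R$ and the flip is Barcelona''. Crossing the diagonal is necessary, not sufficient: $e$ could be matched at both endpoints (Figure~\ref{fig:unflip1}) or be one of the four unflippable types in Figure~\ref{fig:unflip2b}, and ``running Lemma~\ref{lem:switch} in reverse'' does not rule these out. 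Lemma~\ref{lem:switch} shows that Barcelona-flippable configurations are wall-slide rigid and hence survive in $\boxslash R$; it says nothing about arbitrary diagonal-crossing edges in $R$. The paper itself warns, right after Lemma~\ref{lem:switch}, that Law--Reading flips in $\boxslash R$ need not come from Barcelona flips in $R$; you are asserting that the single-transposition hypothesis repairs this, but you have not argued why the unflippable cases are excluded.

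The paper's converse avoids this by a direct contradiction: it fixes the edge $e$ between the $\boxbackslash$-consecutive rectangles, assumes $e$ is unflippable, and in each unflippable case exhibits four rectangles whose $\boxbackslash$-labels and $\boxslash$-positions force $\pi'=\B(R')$ to contain $2\underline{41}3$ or $3\underline{14}2$, contradicting that $\pi'$ is Baxter. This pattern-avoidance argument is precisely the missing piece in your route; to close your gap you would end up doing an equivalent local case analysis on $e$ anyway, so the duality detour does not save work. If you want to keep your framing, the honest statement is that you still owe a proof of the restricted converse of Lemma~\ref{lem:switch} (single-transposition Law--Reading flips in $\boxslash R$ come from Barcelona flips in $R$), and that proof will look very much like the paper's Figures~\ref{fig:proof1}--\ref{fig:proof2} argument.
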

\begin{proof}
$(\Rightarrow)$ First suppose that $R$ and $R'$ are connected by such a flip. If this is a simple flip, it is not difficult to verify, by referring to the descriptions of the map $B$, that the permutations indeed differ by a single transposition of consecutive elements.

Now suppose it is not a simple flip. From Lemma~\ref{lem:switch}, we have that $\boxslash R$ and $\boxslash R'$ are connected by a nonsimple Law-Reading flip. But those involve precisely the edges that do not intersect the diagonal, hence the $\boxslash$-order labels of the rectangles in $R$ and $R'$ are the same. It remains to observe that since flipping the edge does not create any obstruction to the rectangulation being diagonal, the $\boxbackslash$-order of the rectangles $a$ and $b$ (refer to Figure~\ref{fig:proofa}) is simply reversed. We conclude that the flip corresponds to the single transposition of the two elements $a$ and $b$ in $\B (R)$.

\begin{figure}
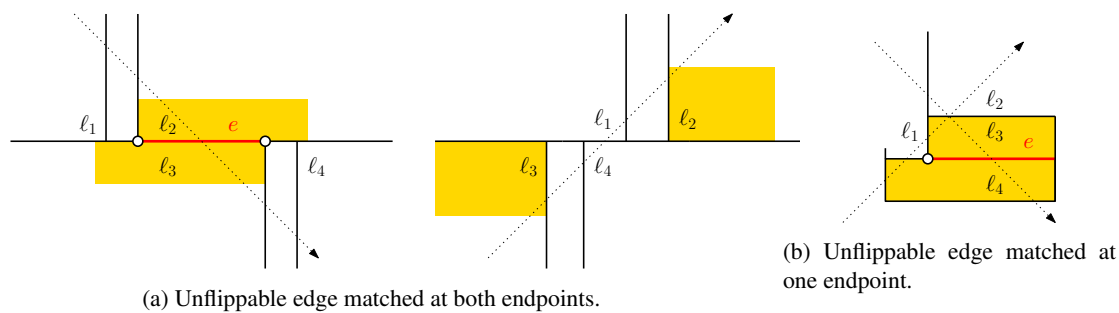

\begin{subfigure}{.65\textwidth}
  \centering
\includegraphics[page=1,scale=.5]{proof.pdf}
\caption{\label{fig:proof1}Unflippable edge matched at both endpoints.}
\end{subfigure}
\hspace{.5cm}
\begin{subfigure}{.3\textwidth}
\centering
\includegraphics[page=2,scale=.5]{proof.pdf}
\caption{\label{fig:proof2}Unflippable edge matched at one endpoint.}
\end{subfigure}
\caption{Illustrations for the proof of Theorem~\ref{thm:main}.}
\end{figure}

$(\Leftarrow)$ We now suppose that the two Baxter permutations $\pi=\B(R)$ and $\pi'=\B(R')$ differ by a single transposition of two consecutive elements $a$ and $b$.
By definition of $\B$ the adjacent transposition must correspond to an edge $e$ in $R$ that intersects the diagonal.
We need to show that $e$ is flippable. 
We proceed by contradiction, and suppose that $e$ is unflippable. 
Unflippable edges come in two flavors, and we consider the two cases separately.

In the first case, $e$ is unflippable because it is matched at both endpoints.
Suppose first that $e$ is horizontal and let $\ell_2=a$ and $\ell_3=b$ 
be the labels of the two rectangles above and below the diagonal, corresponding to the two labels involved in the transposition.
The two endpoints of $e$ must have their other incident edges oriented as shown on Figure~\ref{fig:proof1}.

Now consider the leftmost rectangle having his lower horizontal edge on the same segment as $e$, and denote its label by $\ell_1$ (this may be the rectangle that is just on the left of $\ell_2$, or another rectangle further left).
The diagonal must intersect rectangle $\ell_1$ before rectangle $\ell_2$.
Similarly, consider the rightmost rectangle having it upper horizontal edge on the same segment as $e$, and denote its label by $\ell_4$.
We must have $\ell_1<\ell_2<\ell_3<\ell_4$. 

In the $\boxslash$-order, the configuration of the rectangles is obtained by sliding the edges orthogonal to $e$ so that all the edges above $e$ are on the right of the edges below $e$.
By considering the four rectangles in the $\boxslash$-order, as illustrated on the right of Figure~\ref{fig:proof1}, we can check that $\pi$ contains the pattern $\ell_3\ldots \ell_4\ell_1\ldots \ell_2$.
Therefore, $\pi'$ contains the subsequence $\ell_2\ldots \ell_4\ell_1\ldots \ell_3$, an occurence of the forbidden pattern $2\underline{41}3$, and $\pi'$ cannot be a Baxter permutation, a contradiction.
A similar, symmetric, reasoning can be done when the unflippable edge is vertical, and then the forbidden pattern is $3\underline{14}2$.

In the second case, $e$ is matched at one endpoint only, but still unflippable because rotating it around its matched endpoint would yield a non-diagonal rectangulation.
Again, we have four symmetric cases, illustrated in Figure~\ref{fig:unflip2b}. 
We detail the case where $e$ is horizontal, and is matched at its left endpoint (top right case in the figure).
Figure~\ref{fig:proof2} illustrates what happens in this case.
Let $a=\ell_3$ and $b=\ell_4$ be the labels of the two rectangles above and below $e$, respectively.

From Lemma~\ref{lem:unflip2char}, the top horizontal edge of rectangle $\ell_3$ must be part of the obstruction to the rectangulation being diagonal.
Let us denote by $\ell_2$ the label of the predecessor of $\ell_3$ in the $\boxbackslash$-order.
Consider the rectangle labeled $\ell_1$ to the left of the left vertical edge of $\ell_3$. 
We clearly must have $\ell_1<\ell_2<\ell_3<\ell_4$.

Now remark that the top left corner of rectangle $\ell_4$ must be of type $\vdash$, since otherwise we would have a forbidden configuration for the diagonal representation. 
Therefore, in the $\boxslash$-order, no wall slide can be involved, and the rectangles $\ell_1,\ell_3$, and $\ell_4$ have the same relative positions.
By considering the four rectangles in the $\boxslash$-order, we can check that $\pi$ must contain the pattern $\ell_4\ldots \ell_1\ell_3\ldots \ell_2$ (see Figure~\ref{fig:proof2}).
By definition, $\pi'$ must contain the pattern $\ell_3\ldots \ell_1\ell_4\ldots \ell_2$, which is an instance of the forbidden pattern $3\underline{14}2$.
This is again a contradiction to the fact that $\pi'$ is a Baxter permutation.
The same reasoning can be done on the remaining three types of unflippable edges matched at one endpoint shown on Figure~\ref{fig:unflip2b}.
In all cases, we identify one of the two forbidden patterns in $\pi'$.

Therefore, the edge corresponding to the adjacent transposition in $\pi$ must be flippable, and the transposition indeed corresponds to a flip operation, as claimed.
\end{proof}

Hence this characterization of Barcelona flips is very similar to that of Law-Reading flips from Theorem~\ref{thm:LR}, except that the transpositions involve \emph{consecutive} elements instead of \emph{adjacent} elements, and that only a {\em single} transposition is needed.

\begin{figure}
\begin{center}
\includegraphics[page=8,scale=.6]{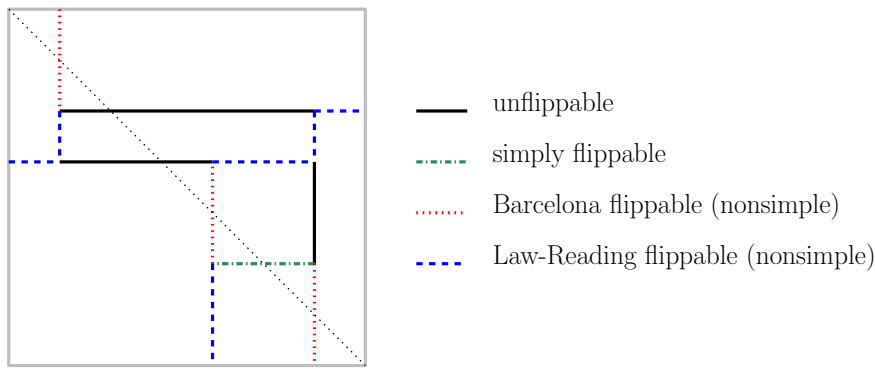}
\caption{\label{fig:all}The various types of flippable and unflippable edges.}
\end{center}
\end{figure}

\begin{figure}
\begin{center}
\includegraphics[height=.8\textheight]{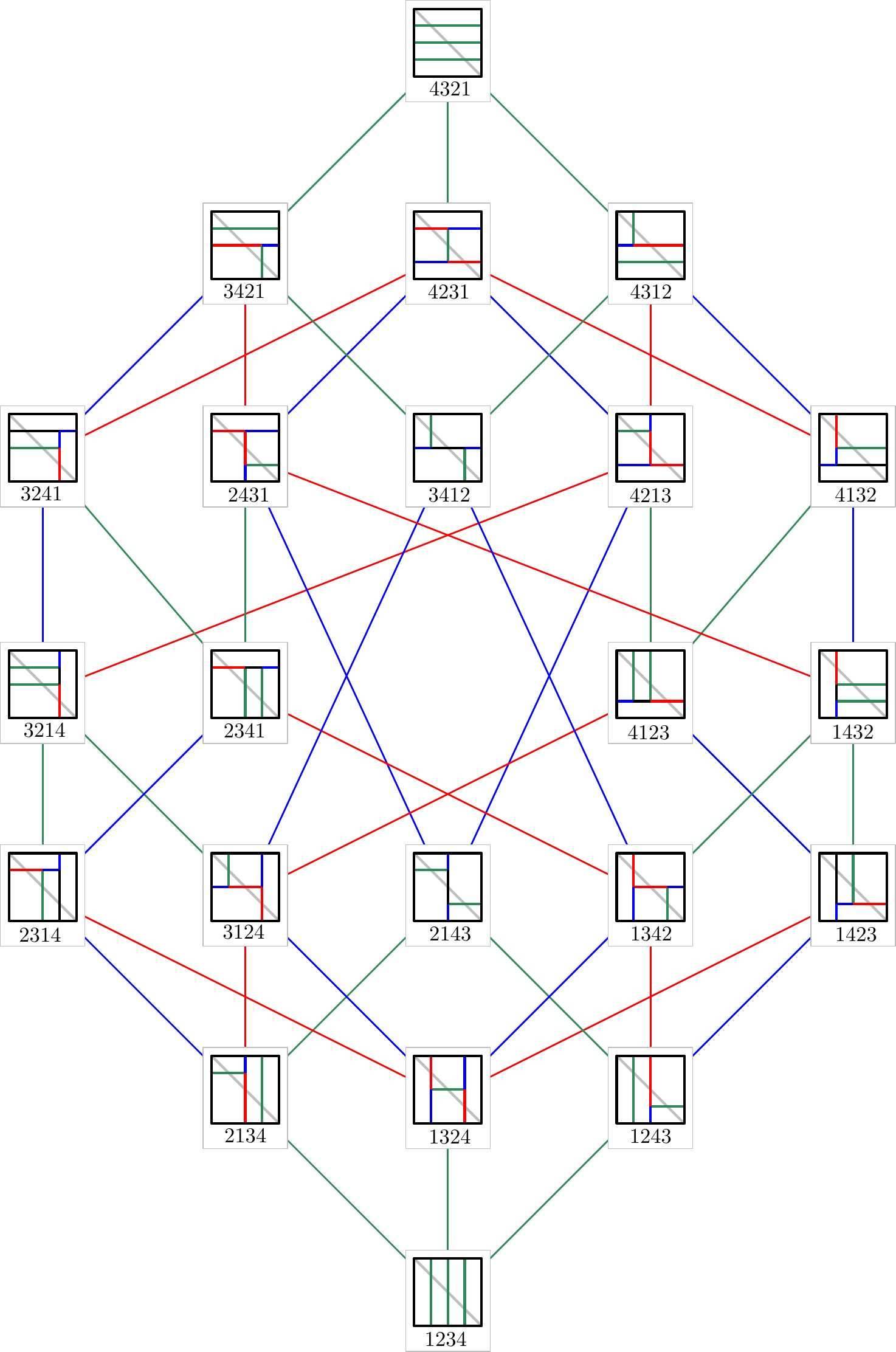}
\end{center}
\caption{\label{fig:flipgraph}The flip graph on diagonal rectangulations made of four rectangles. In each rectangulation, the green edges are simply-flippable, and the blue and red edges are respectively Law-Reading and Barcelona-flippable, but not simply-flippable. The links between the rectangulations are color-coded similarly. The Baxter permutation corresponding to each rectangulation is given.}
\end{figure}

\subsection{Characterization}

We can summarize our combinatorial characterization of flips in diagonal rectangulations as follows.
\begin{theorem}
Two diagonal rectangulations $R$ and $R'$ are connected by a flip if and only if one of these two conditions hold:
\begin{itemize}
\item $\B(R)$ and $\B(R')$ differ by a single transposition of consecutive elements,
\item $\B(R)$ and $\B(R')$ are in a cover relation in $\dRec_n$.
\end{itemize}
Furthermore, $R$ and $R'$ are connected by a simple flip if and only if both conditions hold.
\end{theorem}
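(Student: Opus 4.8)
The plan is to deduce this statement from Theorems~\ref{thm:LR} and~\ref{thm:main} together with the classification of flips carried out in Section~\ref{sec:flips}. The one structural fact I would isolate at the outset is that every flip of a diagonal rectangulation is a Law-Reading flip or a Barcelona flip, and that it is both exactly when it is simple. This follows from the two alternative descriptions already established: Law-Reading flips are the simple flips together with the rotations of flippable edges that do not meet the diagonal, Barcelona flips are the simple flips together with the rotations of flippable edges that do meet the diagonal, a rotated edge either meets the diagonal or it does not, and every simply flippable edge meets the diagonal.

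Granting this, the main equivalence is immediate. If $R$ and $R'$ are connected by a flip, that flip is either a Law-Reading flip, in which case Theorem~\ref{thm:LR} gives that $\B(R)$ and $\B(R')$ are in a cover relation in $\dRec_n$, or a Barcelona flip, in which case Theorem~\ref{thm:main} gives that they differ by a single transposition of consecutive elements. Conversely, if $\B(R)$ and $\B(R')$ are in a cover relation in $\dRec_n$, then Theorem~\ref{thm:LR} exhibits a Law-Reading flip connecting $R$ and $R'$; and if they differ by a single transposition of consecutive elements, then Theorem~\ref{thm:main} exhibits a Barcelona flip. In either case $R$ and $R'$ are connected by a flip.

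For the last sentence, the forward direction is clear: a simple flip is simultaneously a Law-Reading flip and a Barcelona flip, so Theorems~\ref{thm:LR} and~\ref{thm:main} both apply and both conditions hold. For the converse, I would show that if both conditions hold then the Barcelona flip produced by Theorem~\ref{thm:main} must be simple. The quantitative input is that a transposition of two consecutive elements changes the number of inversions by exactly one, since no value lies strictly between them; hence the first condition forces the numbers of inversions of $\B(R)$ and $\B(R')$ to differ by one. Since the second condition makes $\B(R)$ and $\B(R')$ comparable in the weak order, and the weak order is graded by the number of inversions, they are in fact in a cover relation of the weak order, and so they differ by an adjacent transposition. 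A transposition is determined by the unordered pair of positions at which the two permutations disagree, so the unique transposition relating $\B(R)$ and $\B(R')$ is at once adjacent and of consecutive elements. It remains to exclude a non-simple Barcelona flip: if it were the rotation of an edge $e$ crossing the diagonal and separating the two rectangles whose $\boxbackslash$-labels $a$ and $b$ get interchanged, then by Lemma~\ref{lem:switch} the edge $e$ does not cross the $\boxslash$-diagonal of $\boxslash R$ while still separating $a$ and $b$; as both $a$ and $b$ are crossed by that diagonal but their common edge $e$ is not, the diagonal must visit a third rectangle between them, so $a$ and $b$ are not consecutive in the $\boxslash$-order and the induced transposition is not adjacent, a contradiction. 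Hence the Barcelona flip is simple, as desired.

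I expect the delicate point to be exactly this last step: showing that a non-simple Barcelona flip always induces the transposition of two elements lying in non-adjacent positions. The crux is the claim that two rectangles whose common boundary is a single edge $e$ not met by a diagonal cannot be consecutive along that diagonal. This in turn rests on the degree-three condition on interior vertices, which ensures that the two rectangles really do share only one edge $e$, so that a diagonal passing directly from one rectangle to the other would be forced to cross $e$. Everything else is bookkeeping with the three results already in hand.
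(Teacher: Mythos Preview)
Your proposal is correct and follows exactly the approach the paper intends: the final theorem is stated there as a summary of Theorems~\ref{thm:LR} and~\ref{thm:main} together with the flip classification of Section~\ref{sec:flips}, and the paper gives no separate proof. Your derivation of the main equivalence from these ingredients is precisely what is implicit in the text.

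Where you go beyond the paper is in justifying the ``furthermore'' clause, which the paper does not argue at all. Your argument there is sound. The key observations---that a transposition of consecutive elements changes the inversion number by exactly one, that a $\dRec_n$-cover implies weak-order comparability (since $\dRec_n$ is by definition the restriction of the weak order to Baxter permutations), and hence that the unique transposition relating $\B(R)$ and $\B(R')$ must be simultaneously adjacent and of consecutive elements---are all correct. For the exclusion of a non-simple Barcelona flip, note that you are really invoking the \emph{proof} of Lemma~\ref{lem:switch} rather than its statement: it is in that proof that one sees the relative position of $a$ and $b$ is preserved under wall slides and that the flipped edge avoids the $\boxslash$-diagonal in $\boxslash R$ (so the corresponding Law-Reading flip in $\boxslash R$ is itself non-simple). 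With that in hand, your conclusion that $a$ and $b$ cannot be consecutive in the $\boxslash$-order---because the $\boxslash$-diagonal would otherwise have to cross their sole common edge $e$---is correct. It might be worth stating explicitly that the common boundary of $a$ and $b$ is exactly $e$ in $\boxslash R$ as well, which follows from the preservation of their relative position established in the proof of Lemma~\ref{lem:switch}.
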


Note that if the two permutations $B(R)$ and $B(R')$ differ by a single transposition of consecutive elements, their inverses $B(R)^{-1}$ and $B(R')^{-1}$ are in a cover relation in the weak order. The flip graph on diagonal rectangulations can therefore be seen as the union of the cover graph of $\dRec_n$ with those edges induced by the cover graph of the weak order on the inverse permutations. Figure~\ref{fig:all} shows all types of flippable and unflippable edges on our running example. The flip graph on diagonal rectangulations with four rectangles is given in Figure~\ref{fig:flipgraph}. 

\section*{Acknowledgments}
This work was initiated while the first author was visiting the UPC Research Group on Discrete, Combinatorial and Computational Geometry in March-April 2017. The authors wish to thank Andrei Asinowski, Stefan Felsner, and Vincent Pilaud for useful discussions and comments on a preliminary version of this manuscript. V.S. and R.I.S. were partially supported by projects Gen. Cat. DGR 2017SGR1640 and MTM2015-63791-R (MINECO/FEDER).  
R.I.S. was also supported by MINECO through the Ramón y Cajal program.

\bibliographystyle{plain}
\bibliography{flips}

\end{document}